\documentclass[12pt]{article}
\usepackage{graphicx, amsmath, amssymb, cite, setspace, color, relsize, bm, bbold, array, hyperref, dsfont, bbold, cancel,soul,placeins,hyperref,amsfonts,pifont,comment}
\usepackage{wasysym}
\usepackage[dvipsnames]{xcolor}

\usepackage[top=1 in, bottom=1 in, left=1 in, right=1 in]{geometry}

\usepackage{mathtools}
\usepackage{amsthm}
\usepackage{amssymb}
\usepackage[alphabetic]{amsrefs}
\usepackage{graphicx}
\usepackage{tikz}
	\usetikzlibrary{decorations.pathreplacing}
	\usetikzlibrary{patterns}
\usepackage{caption}
\usepackage{enumitem}

\newcommand{\ra}{\rightarrow}
\newcommand{\pr}{\prime}
\newcommand{\de}{\partial}

\newcommand{\R}{\mathbb{R}}

\newcommand{\lbar}[1]{\overline{#1}}
\DeclareMathOperator{\id}{id}

\DeclareMathOperator{\U}{U}

\newtheorem{thm}{Theorem}
\newtheorem{cor}{Corollary}
\theoremstyle{definition}

\newtheorem*{defin*}{Definition}
\theoremstyle{plain}
\newtheorem{lemma}{Lemma}

\newenvironment{customthm}[1]
  {\innercustomthm}
{\endinnercustomthm}

\newenvironment{customlm}[1]
  {\innercustomlm}
{\endinnercustomlm}

\newcommand{\captionfonts}{\footnotesize} 
\makeatletter
\long\def\@makecaption#1#2{%
  \vskip\abovecaptionskip
  \sbox\@tempboxa{{\captionfonts #1: #2}}%
  \ifdim \wd\@tempboxa >\hsize
    {\captionfonts #1: #2\par}
  \else
    \hbox to\hsize{\hfil\box\@tempboxa\hfil}%
  \fi
  \vskip\belowcaptionskip}
\makeatother

\def\lsim{ \lower .75ex \hbox{$\sim$} \llap{\raise .27ex
\hbox{$<$}} }
\def\gsim{ \lower .75ex \hbox{$\sim$} \llap{\raise .27ex
\hbox{$>$}} }



\usepackage{eso-pic}
\usepackage{lipsum}







\let\oldsqrt\sqrt
\def\sqrt{\mathpalette\DHLhksqrt}
\def\DHLhksqrt#1#2{%
\setbox0=\hbox{$#1\oldsqrt{#2\,}$}\dimen0=\ht0
\advance\dimen0-0.2\ht0
\setbox2=\hbox{\vrule height\ht0 depth -\dimen0}%
{\box0\lower0.4pt\box2}}

\begin{document}

\title{Enhanced Bishop-Gromov Theorem}
\date{} 

\author{Adam R.~Brown$^{a,b}$ and Michael H.~Freedman$^{c,d}$}
\maketitle
\vspace{-5mm}
\begin{centering}
$^{a}$ {Google Research (Blueshift), Mountain View, California}

$^{b}$ {Physics Department, Stanford University, Stanford, California }

$^{c}$ {Microsoft Research, Santa Barbara, California}

$^{d}$ {Mathematics Department, UC Santa Barbara, California }

\end{centering}

\begin{abstract}
\noindent The Bishop-Gromov theorem upperbounds the rate of growth of volume of geodesic balls in a space, in terms of the most negative component of the Ricci curvature. In this paper we prove a strengthening of the Bishop-Gromov bound for homogeneous spaces. Unlike the original Bishop-Gromov bound, our enhanced bound depends not only on the most negative component of the Ricci curvature, but on the full spectrum. As a further result, for finite-volume inhomogeneous spaces, we prove an upperbound on the average rate of growth of geodesics, averaged over all starting points; this bound is stronger than the one that follows from the Bishop-Gromov theorem. Our proof makes  use of the Raychaudhuri equation, of  the fact that geodesic flow conserves phase-space volume, and also of a tool we introduce for studying families of correlated Jacobi equations that we call ``coefficient shuffling''. 
\end{abstract}

\tableofcontents 
\newpage 
\section{Introduction}

\subsection{Introduction for physicists}

Pick a point in a Riemannian manifold, and shoot out geodesics in every direction. As time passes, these geodesics will have explored a larger and larger region. This paper will be concerned with the following question
\begin{equation}
\textrm{\bf {question}}: \ \ \ \textrm{as a function of time, what is the volume explored by the geodesics?} 
\end{equation}
In particular, this paper will construct a novel upperbound on this volume as a function of the Ricci curvature. \\

\noindent Let's begin with the simplest nontrivial examples, the maximally symmetric two-dimensional spaces. Since these spaces are homogeneous, it doesn't matter where we start. Up to rescalings, there are  three possibilities, determined by the sign of the curvature:
\begin{eqnarray}
\mathbb{H}^2: &  ds^2 = dt^2 + \sinh^2 \hspace{-1pt} t \, d \theta^2 & \rightarrow \ \ \textrm{vol(t)} = 2 \pi \left(  \cosh  t - 1 \right) \label{eq:H2volumeoftime} \\
\mathbb{R}^2: & \hspace{-.78cm} ds^2 = dt^2 + t^2 \, d \theta^2 & \rightarrow \ \ \textrm{vol(t)} = \pi t^2 \\
\mathbb{S}^2: &  \hspace{-.21cm}  ds^2 = dt^2 + \sin^2 \hspace{-1pt} t \, d \theta^2 & \rightarrow \ \ \textrm{vol(t)} =  2 \pi \left( 1 - \cos t \right)  \ \ \ \ \ \textrm{for } t \leq \pi \  . \label{eq:S2volumeoftime}  
\end{eqnarray}
We see that geodesic balls grow fastest in the negatively curved space, $\mathbb{H}^2$, and slowest in the positively curved space, $\mathbb{S}^2$. The reason is clear: the more negative the curvature, the more the geodesics diverge; and the more geodesics diverge, the more space they consume. This will be an important principle, 
\begin{equation}
\textrm{\bf {heuristic}}: \ \ \ \textrm{negative curvature makes volumes grow fast} \label{eq:heuristicprinciple} \ . 
\end{equation}
     \begin{figure}[htbp] 
    \centering
    \includegraphics[width=2.5in]{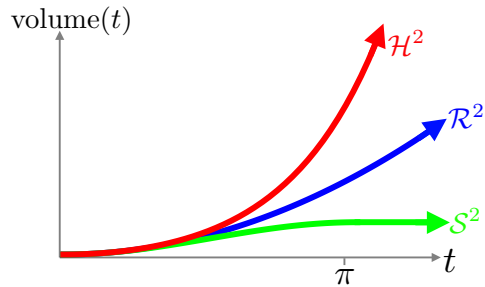} 
    \caption{The volume of a geodesics ball as a function of time for the unit maximally symmetric two-dimensional spaces, given by Eqs.~\ref{eq:H2volumeoftime}-\ref{eq:S2volumeoftime}. The more negative the curvature, the faster the volume grows, exemplifying the heuristic principle~\ref{eq:heuristicprinciple}. At $t=\pi$, the geodesics on the two-sphere reach a cut locus, there is no space left to explore, and the volume saturates at $4\pi$.}
 \end{figure}
 \newpage
The foundational result in trying to make this heuristic precise was proved in 1963, and is known as the Bishop-Gromov theorem \cite{BishopGromov}. (Gromov's name is usually appended for unpublished refinements, see \cite{ballman}.)   For a general $d$-dimensional Riemannian metric, calculating the volume of a geodesic ball as a function of time will in general be quite laborious. However, Bishop and Gromov showed that there is a simple upperbound. Let's describe this bound,  and our enhancement of it. 
\subsubsection{Bishop-Gromov bound for homogeneous spaces}
A homogeneous space is one in which all points (but not necessarily all directions) are the same. In a homogeneous space, therefore, the volume of a geodesic ball is independent of the starting point. The BG theorem upperbounds this volume. First find the most negative component of the Ricci curvature tensor $\mathcal{R}_{\mu \nu} X_\textrm{min}^{\mu} X_\textrm{min}^{\nu}$, which means minimizing the quantity $\mathcal{R}_{\mu \nu} X^{\mu} X^{\nu}$ over all unit tangent vectors $X^{\mu}$.  Next, consider a new space---the maximally symmetric space of the same dimension for which every principal component of the curvature tensor is equal to the $\mathcal{R}_{\mu \nu} X_\textrm{min}^{\mu} X_\textrm{min}^{\nu}$ of the original space. Then calculate the volume of a geodesic ball in this new space, 
\begin{equation}
\textrm{BG}(t)  \equiv   \Omega_{d-1} \, \int_0^t d\tau \, \operatorname{sn}\Bigl(  \frac{{ \mathcal{R}_{\mu \nu} X_\textrm{min}^{\mu} X_\textrm{min}^{\nu}}}{{d-1}}, \tau \Bigl)  ^{d-1} , \label{eq:definitionofBGoft}
\end{equation}
where $\Omega_{d-1}$ is the area of a unit $d$-1-sphere, so that $\Omega_{1} = 2 \pi, \Omega_2 = 4 \pi, \Omega_3 = 2 \pi^2$, etc., and 
\begin{equation}
\operatorname{sn}(k,t)  \equiv 
\left\{ \begin{array}{ccccl} 	
\frac{\sin(\sqrt{k}t)}{\sqrt{k}} & \textrm{for} & k>0& \& & 0 \leq t \leq \frac{\pi}{\sqrt{k}}\\
0 & \textrm{for} & k>0& \& & \ \, \ \ \  \  t \geq  \frac{\pi}{\sqrt{k}}\\
t & \textrm{for} & k=0& \& & \ \ \ \ \ \, t \geq 0\\
 \frac{\sinh(\sqrt{-k}t)}{\sqrt{-k}} & \textrm{for} & k<0& \& & \ \ \ \ \ \, t \geq 0  \ . 
 \end{array} \right.   \label{eq:definitionofsnfunction}
\end{equation}
The Bishop-Gromov theorem says that the volume explored by a geodesic ball in the original space is smaller than the volume of a geodesic ball in the maximally symmetric space,
\begin{equation}
\textrm{BG bound}: \ \ \textrm{volume}(t) \ \leq \  \textrm{BG}(t) \  .  \label{eq:astatementofBGbound}
\end{equation}
(This version is due to Bishop \cite{BishopGromov}, later monotonicity inequalities were proved by Gromov.) A \emph{lower}bound on the Ricci curvature gives an \emph{upper}bound on the volume. 

\subsubsection{Enhanced Bishop-Gromov bound for homogeneous spaces}
One inefficiency in the Bishop-Gromov bound is that it treats all directions the same: if one direction has large negative curvature, it treats all directions as though they have large negative curvature.  This form of collective punishment leads to the upperbound being loose for highly anisotropic spaces. Our new bound will manage to be tighter by treating the different directions more individualistically. Define the quantity 
\begin{equation}
\textrm{enhanced-BG}(t) 
  \equiv    \int_0^t d\tau  \int d \Omega_{d-1} \, \operatorname{sn}\Bigl(  \frac{{ \mathcal{R}_{\mu \nu} X_{\Omega}^{\mu} X_{\Omega}^{\nu}}}{{d-1}}, \tau \Bigl)^{d-1} , \label{eq:explicitimproveBGversion2}
\end{equation}
where $X^\mu_\Omega$ is the unit tangent vector that leaves the origin on a bearing $\Omega$. All the quantities in this expression are to be evaluated at the point of departure, so that $\mathcal{R}_{ \mu \nu } X_\Omega^\mu X_\Omega^\nu$ is a function only of the original angle $\Omega$ and not of time $\tau$; this means that to evaluate enhanced-BG($t$) we do not need to able to solve the geodesic equation, we only need to be able to do an angular integral over the eigenvalues of the Ricci tensor. (For an example of this quantity evaluated for an explicit metric, see Sec.~\ref{sec:improvedBGexamples}.) 
For homogeneous spaces, we will prove that this upperbounds the volume growth of geodesic balls, 
\begin{equation}
\textrm{new theorem}: \ \ \textrm{volume}(t) \leq \textrm{enhanced-BG}(t) \ . \hspace{1cm} \label{eq:newtheorem}
\end{equation}
We will formalize this below as theorem 5. This new bound is stronger than the BG bound 
\begin{equation}
\textrm{enhanced-BG}(t) \ \leq \ \textrm{BG}(t) \ ,
\end{equation} 
with equality only for Einstein spaces (i.e.~only when every eigenvalue of the Ricci tensor is the same, so that $\mathcal{R}_{\mu \nu} = \textrm{constant} \times g_{\mu \nu}$). Just as with the original Bishop-Gromov theorem, this gives a simple upperbound on the volume of geodesic balls; the only information we need to evaluate the bound for homogeneous spaces is the eigenvalues of the Ricci curvature tensor. The original BG bound took the worst-case most-expansive direction $X^\mu$, whereas the enhanced-BG bound considers a time-dependent weighted-average over all angles.

\subsubsection{Bishop-Gromov bound for inhomogeneous spaces}
For inhomogeneous spaces, the volume($t,q$) of a geodesic ball may depend not only on its radius $t$, but also on the point $q$ from which it emanates. The Bishop-Gromov theorem upperbounds the volume of any geodesic ball, no matter what its starting point. The upperbound is defined by taking the maximum value of Eq.~\ref{eq:definitionofBGoft} over all possible starting points, 
\begin{equation}
\textrm{volume}(t,q) \ \leq \ \textrm{max}_q 
  \Omega_{d-1} \, \int_0^t d\tau  \, \operatorname{sn}\Bigl(  \frac{{ \mathcal{R}_{\mu \nu} (q) X_\textrm{min}^{\mu}(q)X_\textrm{min}^{\nu}(q)}}{{d-1}}, \tau \Bigl)^{d-1} . \label{eq:BGforinhomogeneous111}
\end{equation}
 The BG bound for inhomogeneous spaces thus makes two different worst-case assumptions: it considers the worst-case most-expansive direction $X^\mu_\textrm{min}$, and it considers the worst-case most-expansive starting point $q$. 

\subsubsection{New bound for inhomogeneous spaces}

Unfortunately, for inhomogeneous spaces we will not be able to improve the bound on the quantity $\textrm{volume}(t,q)$. For the worst-case starting point, the regular BG-bound Eq.~\ref{eq:BGforinhomogeneous111} is still as good as we can do. Instead, we will develop a new bound on a different quantity---the average volume growth, averaged over all starting points. (In order for this average to be well-defined, we will initially need to restrict ourselves to considering spaces of finite volume.) Our bound will be 
\begin{equation}
\int dq \ \textrm{volume}(t,q) \ \leq \ \int dq \ 
  \Omega_{d-1} \, \int_0^t d\tau  \, \operatorname{sn}\Bigl(  \frac{{ \mathcal{R}_{\mu \nu} (q) X_\textrm{min}^{\mu}(q)X_\textrm{min}^{\nu}(q)}}{{d-1}}, \tau \Bigl)^{d-1} . 
\end{equation}
(Here `$dq$' is to be understood as telling us to integrate with respect to the volume-form on the space.) It is clear that this is a tighter bound than could be derived simply by integrating Eq.~\ref{eq:BGforinhomogeneous111} over $q$, and is therefore tighter than simply follows from the regular BG bound. We will formalize this statement in Sec.~\ref{sec:BGfinite}. For infinite-volume inhomogeneous spaces, the averaging procedure is more delicate, and we have to settle for making the weaker statements formalized in Sec.~\ref{sec:BGinfinite}.

\subsection{Introduction for mathematicians} 

A theorem, originally due to Bishop \cite{bishop}, later extended to the Bishop-Gromov inequalities, provides an upper bound on volume growth in terms of Ricci curvature. It says, in simplest form, that in any Riemannian manifold $M^d$ the volume of the ball $B_q^M(r)$ of radius $r$ about a point $q$ grows no faster than the corresponding ball in a maximally symmetric space $H^d$ of the same dimension $d$ and scaled to have the (unique) eigenvalue of its Ricci tensor $\operatorname{Ric}(H)$ equal to the \emph{smallest} eigenvalue of the Ricci tensor $\operatorname{Ric}_q(M)$, over all $q \in M$,
\begin{equation}\label{eq:vol}
	\operatorname{Vol}(B_q^M(r)) \leq \operatorname{Vol}(B^{{H}}(r)) \ . 
\end{equation}
We reconsider this, and related inequalities, for quantities averaged over $M$. When $M$ is homogenous the new inequalities are stronger than their classical counterparts.

If $\operatorname{Ric}_q(M)$ has a broad spectrum one may be dissatisfied with the classical inequality (\ref{eq:vol}) since the r.h.s.~notices only the smallest eigenvalue of $\operatorname{Ric}_q(M)$ (over all $q \in M$) and thus may give a needlessly high upper bound. It is often observed that one cannot simply average the spectrum of $\operatorname{Ric}_q(M)$ to produce Ricci scalar curvature, and expect a similar inequality. The standard example is a hyperbolic space cross an $n$-sphere ($n \geq 2$) of sufficiently small radius. This space can have positive scalar curvature \emph{but} exponential volume growth. (And we cannot rescue such a bound by restricting to non-positive curvature, as is exhibited in the appendix by considering the example of $\mathbb{H}^3 \times \mathbb{R}^2$.) The purpose of this paper is to prove an inequality similar to (\ref{eq:vol}) which takes the entire spectrum of the family of the Ricci tensors $\operatorname{Ric}_q(M)$ into account. First we will consider the infinite volume case and then turn our attention to finite volume where the Hamiltonian nature of geodesic flow gives us access to certain averaged quantities. An important feature of our inequality is that (for finite volume $M$) it addresses the average growth around all points rather than from a particular point. In the case of homogeneous manifolds, the volume of a geodesic ball is independent of the point of departure $q$, so the growth from any point is given by the average growth. 

An exciting trend in geometry in recent decades has been an openness to studying manifolds of very high dimension. Certainly computer science has been a consumer of high-dimensional geometry. One may expect that the most interesting applications of our result will be in high dimensions where saying $\operatorname{Ric}_q(M)$ has broad spectrum takes on meaning.

Quantum computing provides a motivating example, explored in \cite{bfls}, for how geometries in very high dimension unavoidably arise. This point of view builds on a picture introduced by Nielsen and collaborators \cite{Nielsen1,Nielsen2} 
 and \cite{bs} that the symmetry group $\operatorname{SU}(2^N)$ of an $N$-qubit system should be given a left invariant ``penalty'' metric whose geodesic geometry encodes efficient computation. Penalty metrics, by definition, reflect the cost, exponential in $k$, of controlled motion in $\operatorname{SU}(2^N)$ in a $k$-body direction, $1 \leq k \leq N$. In this way the ball of radius $r$ about $\id \in \operatorname{SU}(2^N)$, $B_{\id}^{\text{penalty}}(r)$, is a surrogate\footnote{This relies on the assumption that quantum computing is the ultimate (realistic) physical model. And that differences between continuous and discrete dynamics is not fundamental.} for what might ever be computed, $r$ being some function of time, space, and energy resources. The volume growth of $B_{\id}^{\text{penalty}}(r)$ is certainly of epistemological\footnote{Mathematicians  rightly shun big (sesquipedalian) words but in this case, the fit seems exact.} interest. For a quantum computer of a million qubits, $\dim(\operatorname{SU}(2^N)) = 4^{10^6} - 1$. As black hole dynamics is now understood as quantum computation \cite{Susskind:2014rva}, and a galactic core black hole may have $10^{90}$ degrees of freedom, geodesic geometry on a $4^{10^{90}}$-dimensional Lie group is relevant to black hole evolution. It is with an eye to such extravagant dimensions that we will tease out the influence of the full spectral content of $\operatorname{Ric}_q(M)$.

After developing some machinery and proving our theorems, we will explain in which regime they are strongest. As it will be seen, their greatest force is intermediate time scales; at the longest time scales they can only improve the usual BG inequalities by a subexponential multiplicative factor. 

This paper introduces a new technique to the study of families of Jacobi equations, second-order linear ODEs. The families we study are indexed by a probability measure space $T$. The various ODEs indexed by $\tau \in T$ are \emph{not coupled}, but their coefficient functions $\kappa_\tau(t)$ are \emph{correlated} in a manner to be explained. Our goal is to upper bound the $T$-\emph{average} growth rate of the solution $j_\tau(t)$. The method we develop, coefficient shuffling, allows a comparison to an exactly solvable, constant coefficient, ``version'' of the original family of equations. This method is also useful in estimating averages under various geometric flows, for example the Raychaudhuri equation from general relativity. Note that coefficient shuffling is an analytic trick that produces a new collection of Jacobi equations, but these are merely convenient fictions that do not generally correspond to geodesic flow on any Riemannian manifold. Thus if our theorem is thought of a ``comparison'' theorem, we are comparing to a fictitious manifold.

\subsection{Heuristic physics proof overview}  \label{sec:heuristicphysicsproof}
In Sec.~\ref{sec3} we will prove our theorem in the language of mathematics, but first let's give a heuristic overview of the proof using the vocabulary of physics. We will consider the homogeneous case, covered by Eq.~\ref{eq:newtheorem}. 

 An equation famous to physicists is the Raychaudhuri equation \cite{Raychaudhuri:1953yv}. The Raychaudhuri equation tells us how the expansion $\theta \equiv \nabla_\mu X^{\mu}$ of a congruence of geodesics $X^\mu$ depends on the shear $\sigma$, vorticity $\omega$, and curvature, and in $d$+0-dimensions is\footnote{Though we will not need these expressions, for context the shear is the traceless symmetric component $\sigma^2 \equiv \sigma_{\mu \nu} \sigma^{\mu \nu} \geq 0$ where $\sigma_{\mu \nu}  \equiv  \frac{1}{2} ( \nabla_\mu X_\nu +\nabla_\nu X_\mu   - \frac{2}{d-1} (\nabla_a X^a) (g_{\mu \nu} - X_\mu X_{\nu}) )$, and the vorticity is the antisymmetric component $\omega^2 \equiv \omega_{\mu \nu} \omega^{\mu \nu} \geq 0$ where $ \omega_{\mu \nu}  \equiv  \frac{1}{2} \left( \nabla_\mu X_\nu - \nabla_\nu X_\mu  \right) $.} 
\begin{equation}
\dot{\theta} = - \frac{1}{d-1} \theta^2 -  \sigma^2 + \omega^2 
- \mathcal{R}_{\mu \nu}X^{\mu} X^{\nu} \, . \label{eq:Raychaudhuri}
\end{equation}
A version of this equation was deployed by Penrose \cite{Penrose:1964wq}, and Penrose and Hawking \cite{Hawking:1969sw}, in  Nobel-prize-winning work showing that the formation of black hole singularities is a generic prediction of general relativity. Penrose \& Hawking's strategy was to use the ($d$+1-dimensional version of the) Raychaudhuri equation to upperbound the expansion of lightsheets, and thereby demonstrate the inevitability of singularities.  
 
Our strategy will be to use the ($d+0$-dimensional version of the) Raychaudhuri equation to upperbound the expansion of geodesic balls.   Step one is to observe that for geodesic balls, the vorticity is zero: one can either argue that geodesic flow conserves angular momentum, or argue that geodesic flow is orthogonal to the constant-time hypersurfaces. Either way, we can put $\omega = 0$. The next step is to drop the shear term $\sigma^2$, turning the equality into an inequality, 
 \begin{equation}
\dot{\theta} +  \frac{1}{d-1} \theta^2  \ \leq \ 
- \mathcal{R}_{\mu \nu}X^{\mu} X^{\nu} \, .\label{eq:preEBGbound}
\end{equation}
(We will have (much) more to say about the shear, and how we can better account for its effect on volume growth, in a subsequent paper \cite{toappear}.) 

To derive the Bishop-Gromov bound, Eq.~\ref{eq:astatementofBGbound},  the next step is to write 
 \begin{equation}
\dot{\theta} +  \frac{1}{d-1} \theta^2  \ \leq \ 
- \mathcal{R}_{\mu \nu}X_\textrm{min}^{\mu} X_\textrm{min}^{\nu} \, . \label{eq:preBGbound}
\end{equation}
The final step is to argue that larger values of $\dot{\theta} +  \frac{1}{d-1} \theta^2$ lead to larger values of the total area and volume: this would be totally trivial if it were just the $\dot{\theta}$ term (bigger expansion gives bigger area), and in Sec.~\ref{sec:monotonicitylemmanegativekappa} we will prove the `monotonicity lemma' that implies it is still true even including the $\frac{1}{d-1} \theta^2$ term. With this lemma, integrating Eq.~\ref{eq:preBGbound} along every geodesic leaving the origin then straightforwardly recovers the Bishop-Gromov bound.

To derive the enhanced-Bishop-Gromov bound, Eq.~\ref{eq:newtheorem}, consider Eq.~\ref{eq:preEBGbound} again. If $\mathcal{R}_{ \mu \nu } X^\mu X^\nu$ were a constant of motion along every geodesic, then the enhanced-Bishop-Gromov bound would follow simply by integrating Eq.~\ref{eq:preEBGbound} along every geodesic leaving the origin. But typically $\mathcal{R}_{ \mu \nu } X^\mu X^\nu$ is not a constant of motion. This means that even if a geodesic starts off with a highly positive value of $\mathcal{R}_{ \mu \nu } X^\mu X^\nu$---which would imply a slow growth rate in that direction---the geodesic can `turn' into a more negative $\mathcal{R}_{ \mu \nu } X^\mu X^\nu$ direction and possibly grow more rapidly. The fact that $\mathcal{R}_{ \mu \nu } X^\mu X^\nu$ may not be conserved along geodesics is thus the major technical obstacle to proving the enhanced-Bishop-Gromov bound.

To deal with this possibility, the major technical insight in this paper will be that geodesic motion is a Hamiltonian flow, and therefore that phase-space volume is conserved. Since phase-space volume is conserved, for every geodesic that turns from a slow direction to a fast direction, there must be another geodesic that turns from a fast direction to a slow direction. By proving, in Sec.~\ref{sec:correlatedJacobis}, a `coefficient shuffling lemma' about the growth rates of correlated families of Jacobi  equations, we will show that the net effect of all this turning is to slow the total rate of growth. In other words, the amount of volume an initially rapidly growing geodesic loses by turning from a fast direction to a slow direction is more than the amount of volume the initially slowly growing geodesic gains in turning from a slow direction to a fast direction. The total volume explored by all the geodesics leaving a point is \emph{reduced} by the fact that $\mathcal{R}_{ \mu \nu } X^\mu X^\nu$ is not conserved, and we can thus upperbound the total rate of growth by pretending there is no turning. This will allow us to prove the enhanced-Bishop-Gromov bound.

\section{Correlated Jacobi equations} \label{sec:correlatedJacobis}

In this section, we will consider properties of solutions $j(t)$ of the Jacobi equation,  
\begin{equation}
j''(t) = \kappa(t) j(t) \ . \label{eq:definitionofjacobi} 
\end{equation}
Unless stated otherwise, we will always take the initial conditions to be $j(0) = 0$ and $j'(0) = 1$. When $j(t)$ is positive,  the rule is that $j(t)$ always evolves according to Eq.~\ref{eq:definitionofjacobi}. But when $\kappa$ is negative, the solution to Eq.~\ref{eq:definitionofjacobi} may itself go negative, and we have a supplemental ad-hoc rule for what happens to $j(t)$ in such a situation, which is that it sticks at zero forever thereafter, i.e.~
\begin{equation}
j(t) = 0 \ \ \ \textrm{for} \ \ t > t_0 \ , \label{eq:adhocrule}
\end{equation}
where $t_0$ is that smallest $t>0$ such that $j(t) = 0$. (This ad-hoc rule is motivated by our eventual application, which will be studying geodesic flows on manifolds; the sign convention for $\kappa$ is chosen for convenience.) Our interest will be in comparing the values of $j(t)$ for different choices of the $\kappa$-schedule $\kappa(\tau)$; since the different $\kappa$-schedules will be correlated we call these `correlated Jacobi equations'.

\subsection{Simple example: two impulses} 
As an illustrative example, let's consider the solution when there are only two, impulsive, contributions to the $\kappa$-schedule, 
\begin{equation}
\kappa(t) = a \, \delta(t-1) + b \, \delta(t-2) . \label{eq:twoimpulses}
\end{equation}
So long as $j(t)$ has never hit zero, the solution is  
\begin{equation}
j(t) = \left\{ \begin{array}{cll}
t & \textrm{for} & 0 \leq t \leq1\\
t + a (t-1) & \textrm{for} & 1\leq t \leq2 \\
t + a (t-1)  + b(2+a)(t-2) & \textrm{for} & 2\leq t 
\end{array} \right. \label{eq:explicitsolutionforj}
\end{equation}
This simple example foreshadows the two results we will prove for general $\kappa(t)$.  
\subsubsection{Foreshadowing monotonicity lemma} 
Our first observation is that larger $a$ and $b$ give rise to larger $j(t)$. In Sec.~\ref{sec:monotonicitylemmanegativekappa} we will generalize this observation to prove the \emph{monotonicity lemma}. This lemma says that if we have two different solutions following two different $\kappa$-schedules, and if  $\kappa_1(t)$ is always bigger than $\kappa_2(t)$,  then  this implies $j_1(t) \geq j_2(t)$. This lemma applies even for negative $\kappa(t)$s. 
\subsubsection{Foreshadowing shuffling lemma} 
Our second observation is that the $ab(t-2)$ term in the last line of Eq.~\ref{eq:explicitsolutionforj} means that the marginal returns of bigger $b$ is bigger when $a$ is bigger. Consider two solution $j_{ab}(t)$ and $j_{AB}(t)$ characterized by two schedules 
\begin{eqnarray}
\kappa_{ab}(t) &\equiv& a \, \delta(t-1) + b \, \delta(t-2) \\ 
\kappa_{AB}(t) &\equiv& A \, \delta(t-1) + B \, \delta(t-2) \ . 
\end{eqnarray}
We can construct another pair of $\kappa$-schedules by ``{shuffling}'' the coefficients
\begin{eqnarray}
\kappa_{aB}(t) &\equiv& a \, \delta(t-1) + B \, \delta(t-2) \\ 
\kappa_{Ab}(t) &\equiv& A \, \delta(t-1) + b \, \delta(t-2) \ . 
\end{eqnarray}
Which grows faster, $j_{AB}(t) + j_{ab}(t)$ or  $j_{Ab}(t) + j_{aB}(t)$? The difference between the sum of the unshuffled or shuffled trajectories is 
\begin{equation}
j_{AB}(t) + j_{ab}(t) - j_{Ab}(t) - j_{aB}(t) = \left\{ \begin{array}{cll}
0 & \textrm{for} & t \leq 2 \\
(A-a)(B-b)(t-2) & \textrm{for} & t \geq 2 
\end{array} \right. \label{eq:explicitdifference}
\end{equation}
This tells us that---given the choice---if we want to make the sum grow as fast as possible, we should pair the coefficients so as to \emph{maximize} the inequality between the two trajectories, pairing the larger value of $\{a,A\}$ with the larger value of $\{b,B\}$. The trajectory that is already growing the fastest makes best use of an additional bigger impulse at $t=2$. In Sec.~\ref{subsec:sortinglemma} we will generalize this observation to prove the \emph{shuffling lemma}. This lemma says that---given the power to instant-by-instant `shuffle' the $\kappa$-schedules amongst the trajectories, so that we can at each instant permute which $j$ is following which $\kappa$---the total growth rate of two trajectories is always maximized by shuffling the $\kappa$-schedules so that one trajectory is always following whichever is the greatest schedule at that instant, and the other is always following whichever is the smallest schedule. Maximizing inequality maximizes total growth. 

     \begin{figure}[htbp] 
    \centering
    \includegraphics[width=6in]{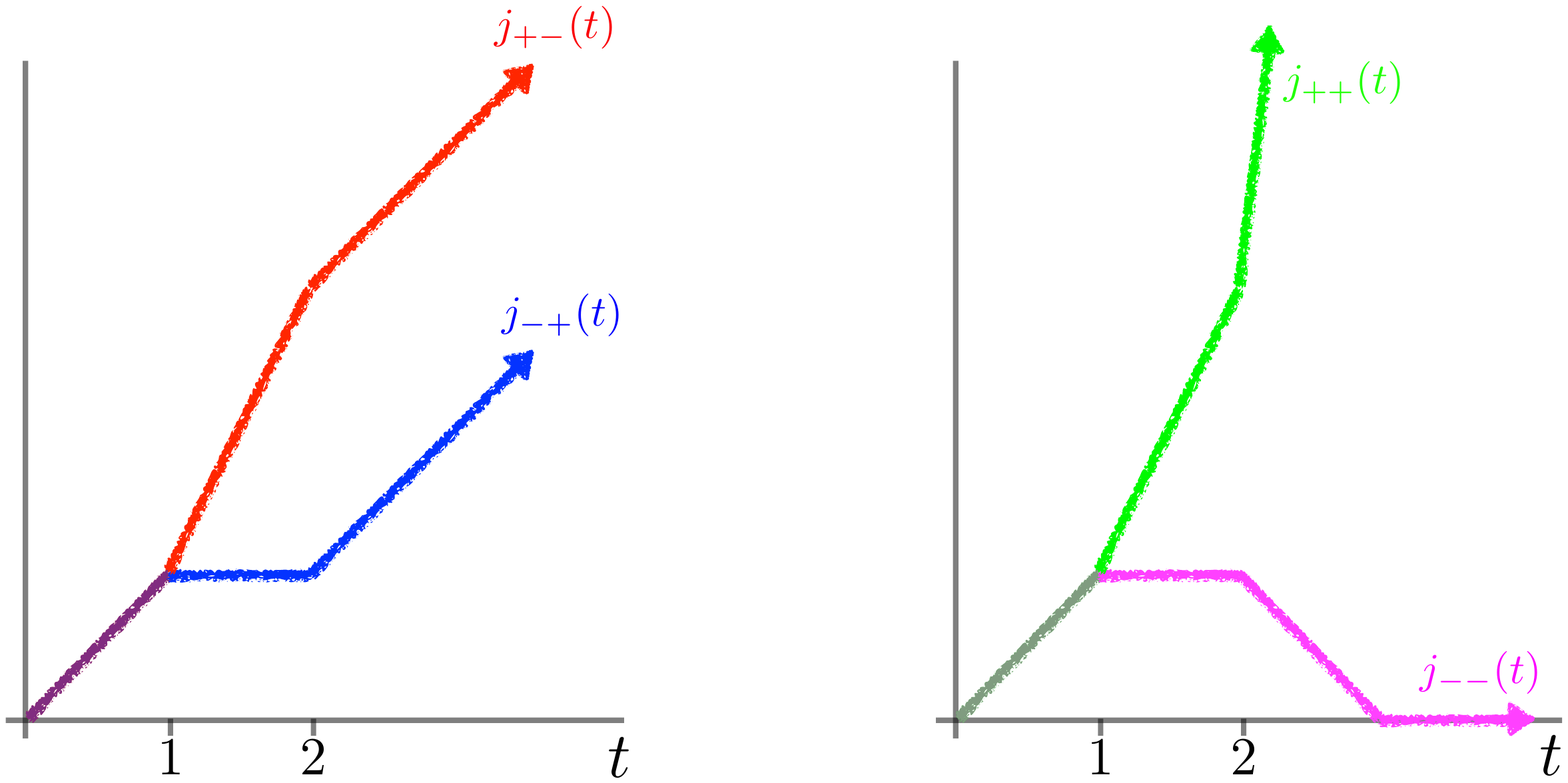} 
    \caption{Left: the solutions to the Jacobi equation with $\kappa$-schedules $\textcolor{red}{\kappa_{+-}(t)}= \delta(t-1) - \delta(t-2) $ and $\textcolor{blue}{\kappa_{-+}(t)} =- \delta(t-1) + \delta(t-2) $. Right: the solutions to the Jacobi equation with  $\textcolor{green}{\kappa_{++}(t)}= \delta(t-1) + \delta(t-2) $ and $\textcolor{magenta}{\kappa_{--}(t) } =-\delta(t-1) - \delta(t-2) $. The monotonicity lemma tells us that $\textcolor{green}{j_{++}(t)} \geq {\textcolor{magenta}{j_{--}(t)}}$ and the shuffling lemma tells us that $\textcolor{green}{j_{++}(t)}  + {\textcolor{magenta}{j_{--}(t)} } \geq {\textcolor{red}{j_{+-}(t)} } + {\textcolor{blue}{j_{-+}(t)} }$.}
    \label{fig:twoimpulses}
 \end{figure}

\subsection{Monotonicity lemma} \label{sec:monotonicitylemmanegativekappa}

Consider two trajectories satisfying $j''_1(t) = \kappa_1(t) j_1(t)$ and $j''_2(t) = \kappa_2(t) j_2(t)$, with initial conditions $j_1(0) = j_2(0) = 0$ \& $j_1'(0) = j_2'(0) = 1$. Let's prove 
\begin{equation}
\textrm{monotonicity lemma}: \ \ \ \  \forall t , \kappa_1(t) \geq \kappa_2(t) \ \ \rightarrow \ \ \forall t, j_1(t)  \geq j_2(t) \ . \ \ \ \ \ \ \ \ \ \ \ 
 \label{eq:monotonicitylemma}
\end{equation}
Note that it is \emph{not} always true that $j'_1(t) \geq j'_2(t)$. For example, consider Eq.~\ref{eq:twoimpulses} with $a_1 = 2, a_2 = 1, b_1 = b_2 = -100$: the impulse at $t=2$ imparts a negative change in velocity to both trajectories proportional to $j(2)$, but since $j_1(2) > j_2(2)$  this gives $j_1$ a more negative velocity. But even though $j_1$ has a more negative velocity, it's not negative enough to catch up with $j_2$ before $j_2$ hits zero, and so doesn't violate the monotonicity lemma.  

Our proof of the monotonicity lemma we will have two steps. The first step will be to show that the quantity $\frac{j'_1}{j_1} - \frac{j'_2}{j_2}$ is nonnegative for all $j_2(t)>0$. This quantity starts off non-negative, since Taylor expanding Eq.~\ref{eq:definitionofjacobi} around $t=0$ gives $\frac{j'_1}{{j_1}} - \frac{j'_2}{{j_2}}=\frac{1}{3} \left( \kappa_1(0) - \kappa_2(0) \right) t + \ldots \geq 0$. We can then argue that the quantity remains non-negative by observing that if it were to cross zero, there would need to be a moment when $\frac{j'_1}{j_1} - \frac{j'_2}{j_2} = 0$ and $\frac{d}{dt}(\frac{j'_1}{j_1} -  \frac{j'_2}{j_2}) < 0$. However this is forbidden because for $\frac{j'_1}{j_1} - \frac{j'_2}{j_2} = 0$ the right hand side of 
\begin{equation}
\frac{d}{dt} \left( \frac{j'_1}{j_1} - \frac{j'_2}{j_2} \right) =  \left( \frac{j''_1}{j_1} - \frac{j''_2}{j_2} \right)  - \left( \frac{(j'_1)^{\, 2}}{j_1^{\, 2}} -\frac{(j'_2)^{\, 2}}{j_2^{\, 2} } \right) =  \kappa_1(t) -  \kappa_2(t)  - \left( \frac{(j'_1)^{\, 2}}{j_1^{\, 2}} -\frac{(j'_2)^{\, 2}}{j_2^{\, 2} } \right), \label{eq:montonicitystep}
\end{equation}
is equal to $\kappa_1(t) - \kappa_2(t)$ and is therefore by assumption nonnegative.

Next, we argue that $j_1(t) \geq j_2(t)$ by observing that if they were to cross, there would need to be a moment when $j_1 = j_2$ and $j'_1 < j'_2$, but the inequality we just established, $\frac{j'_1}{j_1} - \frac{j'_2}{j_2} \geq 0$, forbids this. This establishes Eq.~\ref{eq:monotonicitylemma}. 

For future reference, note that the same reasoning that led to the monotonicity lemma also works if instead of starting at $t=0$ we start at some later time $T$:
\begin{equation}
j_1(T) \geq j_2(T) \ \ \& \  \ \frac{j'_1(T)}{j_1(T)} \geq \frac{j'_2(T)}{j_2(T)} \ \  \& \  \  \forall t \geq T , \kappa_1(t) \geq \kappa_2(t) \ \ \rightarrow \ \ \forall t \geq T, j_1(t)  \geq j_2(t) \ . 
 \label{eq:monotonicitylemmalatestart}
\end{equation}

 \subsection{Shuffling lemma} \label{subsec:sortinglemma}
Consider two trajectories satisfying $j''_1(t) = \kappa_1(t) j_1(t)$ and $j''_2(t) = \kappa_2(t) j_2(t)$. The shuffling lemma says that the total rate of growth  is faster if we shuffle the schedules, 
   \begin{equation}
\textrm{shuffling lemma:} \ \ \ \ \ j_\textrm{max}(t) + j_\textrm{min}(t)  \geq  j_1(t) + j_2(t) \ , \ \ \ \ \ \ \ \ \label{eq:shufflinglemma}
\end{equation}
where $j''_\textrm{max} = \kappa_\textrm{max}(t) j_\textrm{max}(t)$ and $j''_\textrm{min} = \kappa_\textrm{min}(t) j_\textrm{min}(t)$ and we have defined 
\begin{equation}
\kappa_\textrm{max}(t)  \equiv  \textrm{max}[\kappa_1(t),\kappa_2(t)] \ \ \ \& \ \ \  
\kappa_\textrm{min}(t)  \equiv  \textrm{min}[\kappa_1(t),\kappa_2(t)]  \label{eq:definitionofkappaminnoshit} . 
\end{equation}

     \begin{figure}[htbp] 
    \centering
    \includegraphics[width=6in]{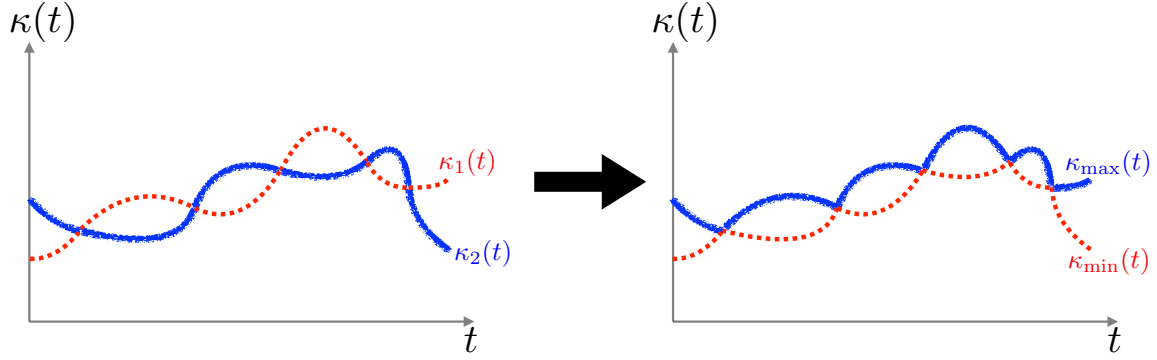} 
    \caption{The shuffling lemma Eq.~\ref{eq:shufflinglemma} says that shuffling from $\kappa_1(t)$ and $\kappa_2(t)$ to ${\kappa}_\textrm{max}(t)$ and ${\kappa}_\textrm{min}(t)$ increases the total rate of growth.}
    \label{fig:switchingkappas}
 \end{figure}
 
\noindent For later reference, note that the monotonicity lemma Eq.~\ref{eq:monotonicitylemma} guarantees that 
\begin{equation}
j_\textrm{max}(t) \geq j_1(t) , j_2(t) \geq j_\textrm{min}(t)  \ , \label{eq:ducksinarow}
\end{equation} 
and therefore as a matter of algebra that $j_\textrm{max} - j_\textrm{min} \geq  0$ and 
\begin{eqnarray}
 \frac{j_\textrm{max} - j_\textrm{min} } {j_\textrm{max} + j_\textrm{min} } & \geq & \frac{|j_1 - j_2|}{j_1+ j_2}  \  \geq \ 0 \ .  \label{eq:actuallnonobviousalgebraresult}
\end{eqnarray}

\subsubsection{Proving shuffling lemma for $j_\textrm{min}(t) > 0$} \label{sec:firstshufflingproof}
First let's prove the shuffling lemma for the time before $j_\textrm{min}(t)$ hits zero. (For some schedules, including schedules with  $\kappa_\textrm{min}(t)$ always positive, $j_\textrm{min}(t)$ will never hit zero, so this era will last forever.) We will prove the lemma by considering the equations of motion for the two quantities
\begin{eqnarray}
\frac{d^2}{dt^2} \left( j_1 + j_2 \right) &=& 
\left(  \frac{\kappa_1 + \kappa_2 }{2}  + \frac{\kappa_1 - \kappa_2 }{2} \frac{ j_1 - j_2 }{j_1 + j_2}  \right)   \left( j_1 + j_2 \right)  \label{eq:kappa1kappa2sum} \\ 
\frac{d^2}{dt^2} \left( j_\textrm{max} + j_\textrm{min} \right) 
& = &\left(  \frac{\kappa_\textrm{max} + \kappa_\textrm{min}}{2} + \frac{\kappa_\textrm{max} - \kappa_\textrm{min} }{2}  \frac{ j_\textrm{max} - j_\textrm{min} }{j_\textrm{max} + j_\textrm{min} }  \right)  \left( j_\textrm{max} + j_\textrm{min} \right)   \ . \label{eq:kappapluskappaminussum}
\end{eqnarray}
These expressions follow as a matter of algebra from Eq.~\ref{eq:definitionofjacobi}. The point is that the $\kappa$-schedule for the second equation of motion is larger than the $\kappa$-schedule for the first, i.e.~that 
\begin{equation}
\frac{\kappa_\textrm{max} + \kappa_\textrm{min}}{2}  + \frac{\kappa_\textrm{max} - \kappa_\textrm{min} }{2}  \frac{ j_\textrm{max} - j_\textrm{min} }{j_\textrm{max} + j_\textrm{min} } \ \geq \ \frac{ \kappa_1 + \kappa_2}{2}  + \frac{\kappa_1 - \kappa_2 }{2} \frac{ j_1 - j_2 }{j_1 + j_2} \ . \label{eq:coefficientisbiggerinthefirst}
\end{equation}
To see this, note that by definition $\kappa_\textrm{max} + \kappa_\textrm{min} = \kappa_1 + \kappa_2$ and $\kappa_\textrm{max} - \kappa_\textrm{min} =  |\kappa_1 - \kappa_2| \geq 0$ and then appeal to Eq.~\ref{eq:actuallnonobviousalgebraresult}. Since the schedule for the quantity $j_\textrm{max}(t) + j_\textrm{min}(t)$ is more positive than the schedule for the quantity $j_1(t) + j_2(t)$, we can use the monotonicity lemma Eq.~\ref{eq:monotonicitylemma} between these two quantities to establish the shuffling lemma, Eq.~\ref{eq:shufflinglemma}. 

\subsubsection{Proving shuffling lemma for $j_\textrm{min}(t) = 0$ \& $j_1(t),j_2(t) > 0$ }

If none of the $j$s ever hit zero, the era described in Sec.~\ref{sec:firstshufflingproof} lasts forever.  If some of the $j$s do hit zero, the monotonicity lemma Eq.~\ref{eq:monotonicitylemma} guarantees that the first to hit zero will be $j_\textrm{min}$.  Let us say $j_\textrm{min}(t)$ hits zero at $t=t_0$. At $t_0$, Eq.~\ref{eq:kappapluskappaminussum} no longer reliably describes the evolution of the quantity $j_\textrm{max}(t) + j_\textrm{min}(t)$, since our rule Eq.~\ref{eq:adhocrule} is that $j_\textrm{min}(t) = 0$ for all $t \geq t_0$. In Sec.~\ref{sec:firstshufflingproof} we proved that for $t<t_0$ we have $j_\textrm{max}(t) + j_\textrm{min}(t) \geq j_1(t) + j_2(t)$ and $\frac{j'_\textrm{max}(t) + j'_\textrm{min}(t)}{j_\textrm{max}(t) + j_\textrm{min}(t) } \geq \frac{j'_1(t) + j'_2(t)}{j_1(t) + j_2(t)}$. Since $j_\textrm{min}(t_0) = 0$ and $j_\textrm{min}'(t_0) \leq 0$, this means that immediately following $t_0$ we have 
\begin{equation}
j_\textrm{max}(t) \geq j_1(t) + j_2(t)  \ \ \ \textrm{ and } \ \ \ \frac{j'_\textrm{max}(t) }{j_\textrm{max}(t) } \geq \frac{j'_1(t) + j'_2(t)}{j_1(t) + j_2(t)} \ . 
\end{equation}
Indeed, these two inequalities remain true throughout this era. This follows from applying the monotonicity lemma Eq.~\ref{eq:monotonicitylemmalatestart} between the quantity $j_\textrm{max}(t)$ and the quantity $j_1(t) + j_2(t)$,  since the equation of motion for $j_1(t) + j_2(t)$ may be written 
\begin{equation}
(j_1 + j_2)'' = \left( \kappa_\textrm{max}  -  \frac{( \kappa_\textrm{max} - \kappa_1) j_1 }{j_1 + j_2}     -  \frac{ ( \kappa_\textrm{max} - \kappa_2) j_2}{j_1+ j_2}    \right) (j_1 + j_2)  \leq \kappa_\textrm{max} (j_1 + j_2) \ . 
\end{equation}
This establishes the shuffling lemma in this era.

\subsubsection{Proving shuffling lemma for $j_\textrm{min}(t) = j_1(t)=  0$ \& $j_2(t) > 0$}
 The next era begins when the first of $j_1(t)$ or $j_2(t)$ hits zero; without loss of generality let's say $j_1(t) = 0$. In this era, the shuffling lemma is just the statement that $j_\textrm{max}(t) \geq j_2(t)$, which follows directly from the monotonicity lemma between $j_\textrm{max}(t)$ and $j_2(t)$.

\subsubsection{Proving shuffling lemma for $j_\textrm{min}(t) = j_1(t)=  j_2(t) = 0$}
 Finally, in the era when $j_\textrm{min}(t)$, $j_1(t)$, and $j_2(t)$ have all hit zero the shuffling lemma says $j_\textrm{max}(t) \geq 0$, which is trivially true.  
This completes our proof  for all eras.

\subsection{Shuffling lemma for higher powers} \label{subsec:sortinglemmahigherpowers}
The shuffling lemma Eq.~\ref{eq:shufflinglemma} remains true if we raise each of the quantities to a higher power 
\begin{equation}
\textrm{higher-power shuffling lemma:} \ \ \ \ \ \ \ \   p \geq 1 \ \ \rightarrow \ \  \ \ j_\textrm{max}(t)^p + j_\textrm{min}(t)^p  \geq  j_1(t)^p + j_2(t)^p \ . \ \ \ \ \ \ \ \ \ 
 \label{eq:generalizedswitching}
\end{equation}
 We will argue by induction for integer $p$ (which is all we need in this paper). First recall that Eq.~\ref{eq:generalizedswitching} is true for $p=1$, as established by Eq.~\ref{eq:shufflinglemma}. Now assume that it is true for all integer values smaller than $p$, we'll show that it must also be true for $p$. Since it is true for all values up to $p-1$ we have  
\begin{equation}
(  j_\textrm{max}(t) + j_\textrm{min}(t))(  j_\textrm{max}(t)^{p-1} + j_\textrm{min}(t)^{p-1})  \geq (  j_1(t) + j_2(t))(  j_1(t)^{p-1} + j_2(t)^{p-1})  \ . \label{eq:plusdoubledup}
\end{equation}
On the other hand Eq.~\ref{eq:ducksinarow} tells us that 
\begin{equation}
(  j_\textrm{max}(t) - j_\textrm{min}(t))(  j_\textrm{max}(t)^{p-1} - j_\textrm{min}(t)^{p-1})  \geq (  j_1(t) - j_2(t))(  j_1(t)^{p-1} - j_2(t)^{p-1})  \ . \label{eq:minusdoubledup}
\end{equation}
 Adding together Eqs.~\ref{eq:plusdoubledup} and \ref{eq:minusdoubledup} gives Eq.~\ref{eq:generalizedswitching}, completing the proof by induction.

\subsection{Shuffling lemma for many trajectories} \label{subsec:sortinglemmalotsofkappas}

Let's generalize the shuffling lemma to more than two solutions. We will prove that for any set of solutions $j''_i(t) =  \kappa_i(t) j_i(t)$, if granted the power at each moment to permute which trajectory $j_i$ is following which schedule $\kappa_i$,  the largest value of
\begin{equation}
\sum_i j_i(t)^p ,
\end{equation}
for all $p \geq 1$, is achieved by perfectly ordering the schedules, so that one trajectory always follows whatever is the largest $\kappa_i(t)$ at that time, another trajectory always follows the second largest, another trajectory always follows the third largest, etc., as in Fig.~\ref{fig:orderinglemma}. To maximize the total value of all the trajectories, we should maximize the inequality between the trajectories. This follows directly from the two-schedule result, Eq.~\ref{eq:generalizedswitching}, together with the fact we can order any number of schedules by repeatedly iterating pairwise shufflings.

     \begin{figure}[htbp] 
    \centering
    \includegraphics[width=6in]{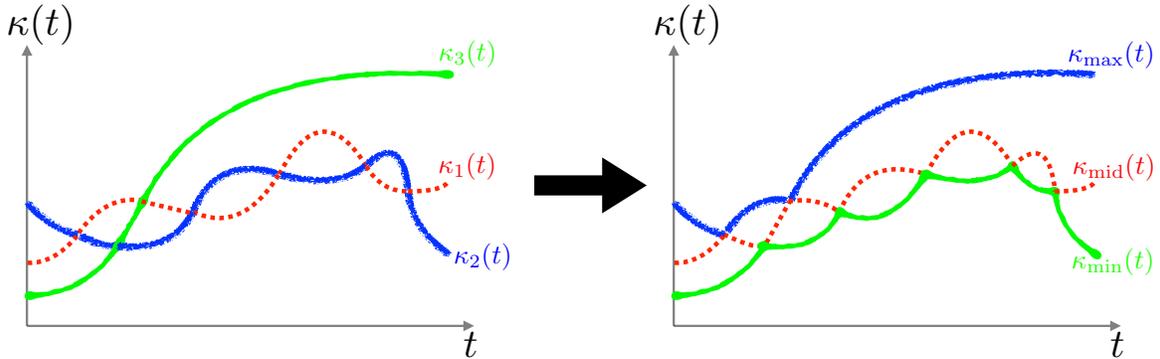} 
    \caption{Shuffling from $\kappa_1(t)$ and $\kappa_2(t)$ and $\kappa_3(t)$ to ${\kappa}_\textrm{max}(t)$ and  ${\kappa}_\textrm{mid}(t)$ and ${\kappa}_\textrm{min}(t)$ increases the total rate of growth, $j_\textrm{max}(t) + j_\textrm{mid}(t) + j_\textrm{min}(t) \ \geq \ j_1(t) + j_2(t) + j_3(t)$. This follows from the two-schedule shuffling lemma, depicted in Fig.~\ref{fig:switchingkappas}, since we can perfectly order any number of trajectories using pairwise shuffles.} 
    \label{fig:orderinglemma}
 \end{figure}

\subsection{The continuum limit of shuffling} 

In the proofs above, we showed that shuffling so as to perfectly order the schedules always increases the total rate of growth. As presented, our proof only established this for a \emph{finite} number of shuffles between a \emph{finite} number of trajectories. In our intended application, we will actually be interested in applying the lemma for an uncountable number of shuffles (one shuffle at each instant of time) between an uncountable number of trajectories (one trajectory per geodesic leaving each point). From a physics point of view, it is clear this will not be a problem: the Jacobi equation is a second-order equation, so the trajectories have inertia, which smooths the UV; and in any event the $\kappa$-schedules $\mathcal{R}_{\mu \nu}X^{\mu} X^{\nu}$ will be differentiable along geodesics. This means continuous shuffling can be arbitrarily well approximated by a large but finite number of shuffles. Nevertheless, let's argue that more carefully. 

An important special case for the switching lemma is when $\kappa(t)$ is the sum of Dirac measures. In our applications to differential geometry we will have to switch uncountably many solutions uncountably many times, so measure theory is a handy tool. We will pass from discrete probability measure spaces and $\kappa(t)$ atomic to general probability measure spaces and general (e.g.\ continuous) $\kappa(t)$. The discrete probability measure space $\{\tau_1, \dots, \tau_n\} \coloneqq T_n$ will map to and approximate the general probability measure space $T$.

Let $\mathcal{F}_0$ denote a family of (uncoupled) Jacobi equations
\begin{equation}\label{eq:jacfam}
	\mathcal{F}_0 = \{j_\tau^{\pr \pr}(t) = \kappa_\tau(t) j_\tau(t), \ 0 \leq t \leq t_0, \text{ with standard initial conditions}\}
\end{equation}
with the monotonicity property that if $\kappa_\tau(t) > \kappa_{\tau^\pr}(t)$ for some $t \in [0, t_0]$ then $\kappa_\tau(t) \geq \kappa_{\tau^\pr}(t)$ for all $t \in [0, t_0]$. The parameter $\tau \in T$, with $T$ being a probability measure space with measure $\mu_T$. The coefficient function $\kappa_\tau(t) \coloneqq \kappa(\tau, t)$ may be a continuous function $T \times [0,t_0] \ra \R$ to the Reals. But, more generally, $\kappa(\tau,t)$ can also be a weak-$\ast$ limit of continuous functions, e.g. $\kappa(\tau,t)$ may be a measurable function or merely a measure. We assume that each Jacobi equation in $\mathcal{F}_0$ has a solution on $[0, t_0]$. There is an important special case of (\ref{eq:jacfam}), which finds applications in Sec.~\ref{sec3}, and that is the case where all coefficient functions $\kappa_\tau(t)$ are constant w.r.t.\ $t$:
\begin{equation}\label{eq:cons}
	\text{constant case: } \mathcal{F}_0 = \{j_\tau^{\pr \pr}(t) = \kappa_\tau j(t) , \ 0 \leq t \leq t_0, \text{ with standard initial conditions}\}  . 
\end{equation}

From initial families (\ref{eq:jacfam}) or (\ref{eq:cons}) many other families $\mathcal{F}_\sigma$ can be made by a procedure we continue to call \emph{coefficient shuffling}, it being a limit of the discrete \emph{coefficient shuffling} explained above:
\begin{equation}
	\mathcal{F}_\sigma = \{j_\tau^{\pr \pr} = \kappa_{\sigma(t)\tau} j(t) , \ 0 \leq t \leq t_0, \ \text{with standard initial conditions}\} , 
\end{equation}
where $\sigma(t)$ is a 1-parameter family of measure-automorphisms of $T$. That is, $\sigma: T \times [0, t_0] \ra T \times [0, t_0]$ is a measurable function such that for all $t \in [0, t_0]$, $\sigma_t \coloneqq \sigma\vert_{T \times t}: T \times t \ra T \times t$ preserves levels, and satisfies, for all measurable $S \subset T$, $\mu_T(\sigma_t(S \times t)) = \mu_T (S \times t)$. We also assume $\sigma(0) = \id_{T \times 0}$. Observe that in the case $\sigma = \id_{T \times [0, t_0]}$, $\mathcal{F}_\sigma = \mathcal{F}_0$.

For each $s \in [0, t_0]$ and ``power'' $p$, $1 \leq p < \infty$, define the \emph{total solution}
\begin{equation}
	\mathrm{Tot}(s,p,\mathcal{F}_\sigma) = \int_T (\mathrm{Sol}(\mathcal{F}_\sigma)\Big\vert_s)^p , 
\end{equation}
where $\mathrm{Sol}(\mathcal{F}_\sigma)$ denotes the set of solutions to the standard-initial-conditions Jacobi equations in the family $\mathcal{F}_\sigma$. The symbol $\vert_s$ directs us to evaluate the equations at $s \in [0, t_0]$ and then we integrate the $p^{\text{th}}$ power (which is positive since we do not integrate beyond focal points) of the evaluations over $T$.

\begin{thm}
	Assume $\mathcal{F}_0$ is a monotone family ($\kappa_\tau(s)>\kappa_\tau'(s)$, for some $s$ in $[0,t]$ implies $\kappa_\tau(s)\geq \kappa_\tau'(s)$ for all $s$ in $[0,t]$), then for all $s \in [0, t]$ and $p \geq 1$,
	\[
	\mathrm{Tot}(s,p,\mathcal{F}_0) \geq \mathrm{Tot}(s,p,\mathcal{F}_\sigma) \ . 
	\]
\end{thm}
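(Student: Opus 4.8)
The plan is to reduce the theorem to the finite, many-trajectory higher-power shuffling lemma of Section~\ref{subsec:sortinglemmalotsofkappas} by a two-step discretization of the data $(T,\mu_T,\kappa,\sigma)$, followed by a limiting argument. The conceptual point is that the monotonicity hypothesis on $\mathcal{F}_0$ says exactly that the coefficient functions $\{\kappa_\tau(\cdot)\}_{\tau \in T}$ never cross on $[0,t_0]$, hence are \emph{simultaneously sorted}: the rank of $\kappa_\tau(t)$ among $\{\kappa_{\tau'}(t)\}_{\tau'}$ is independent of $t$. Thus $\mathcal{F}_0$ is the ``perfectly ordered'' member of the family of all coefficient shufflings of itself, and the many-trajectory higher-power shuffling lemma of Section~\ref{subsec:sortinglemmalotsofkappas} (built on Eq.~\ref{eq:generalizedswitching}) says precisely that this ordered configuration maximizes $\int_T(\mathrm{Sol})^p$ among all reshufflings. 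The remaining work is to make this rigorous when $T$ is a continuum.

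\emph{Discretizing $T$ and $\kappa$.} We may assume $T$ is a standard (Lebesgue) probability space, identify it with $([0,1],\mathrm{Leb})$ up to measure isomorphism (atoms are carried along with obvious modifications), cut $[0,1]$ into $n$ equal dyadic blocks $I^{(n)}_1,\dots,I^{(n)}_n$, and replace $\kappa(\tau,t)$ by $\kappa^{(n)}(\tau,t)$, its $t$-wise average (as a measure in $t$) over the block containing $\tau$. Then $\kappa^{(n)} \to \kappa$ in the weak-$\ast$ sense and the no-crossing property is inherited by the resulting family $\mathcal{F}^{(n)}_0$. In the constant-coefficient case~\ref{eq:cons} --- the one needed in Section~\ref{sec3} --- this step is essentially trivial, since $\kappa^{(n)}$ is just the blockwise average of $\tau \mapsto \kappa_\tau$.

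\emph{Discretizing $\sigma$ and concluding in the finite case.} With $n$ fixed, approximate the path $t \mapsto \sigma_t$ in $\mathrm{Aut}([0,1],\mathrm{Leb})$ by a \emph{staircase} $t \mapsto \sigma^{(n)}_t$ that is piecewise constant in $t$ with finitely many jumps and takes values among the permutations of the blocks $\{I^{(n)}_k\}$, using the classical weak-topology density of dyadic block-permutations in $\mathrm{Aut}([0,1],\mathrm{Leb})$ together with a step-function approximation of the path (the leftover set of $t$'s having arbitrarily small measure). Such a staircase $\sigma^{(n)}$ is exactly the sort of finite, time-dependent reshuffling of $n$ Jacobi equations covered by the many-trajectory higher-power shuffling lemma of Section~\ref{subsec:sortinglemmalotsofkappas}: its proof iterates pairwise shuffles, each localized to one of the finitely many sub-intervals of constancy and each taken in the sorting direction (possible by no-crossing), the needed domination being propagated forward past each sub-interval by the late-start monotonicity lemma, Eq.~\ref{eq:monotonicitylemmalatestart}. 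Since $\mathcal{F}^{(n)}_0$ is the perfectly-sorted (identity-permutation) configuration, that lemma gives
\[
\mathrm{Tot}(s,p,\mathcal{F}^{(n)}_0) \ \geq\ \mathrm{Tot}(s,p,\mathcal{F}^{(n)}_{\sigma^{(n)}}) \qquad \text{for all } s \in [0,t_0], \ p \geq 1 .
\]

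\emph{Passing to the limit; the main obstacle.} It remains to let $n \to \infty$, i.e.\ to show $\mathrm{Tot}(s,p,\mathcal{F}^{(n)}_0) \to \mathrm{Tot}(s,p,\mathcal{F}_0)$ and the analogous statement with $\sigma$. For each fixed $\tau$, the ``raw'' solution of $j'' = \kappa j$ with standard initial conditions depends continuously on $\kappa$ in the weak-$\ast$ topology (integrate twice), and the stick-at-zero rule~\ref{eq:adhocrule} merely replaces $j_\tau(s)$ by $0$ once the raw solution has first vanished in $[0,s]$. The genuinely delicate point --- and where essentially all the care is needed --- is the continuity of the functional $\mathrm{Tot}$ through the interaction of this discontinuous rule with the two limits: one must verify that, as the data converge, the first focal time of $j_\tau$ in $[0,s]$ converges for a.e.\ $\tau$, so that the set of ``borderline'' $\tau$ (those with a focal point at or near $s$) has measure tending to $0$. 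Granting this, the $j_\tau(s)$ are uniformly bounded in $\tau$ (from boundedness of $\kappa$ on $T \times [0,t_0]$ and $\mu_T(T) < \infty$), so the dominated convergence theorem yields convergence of the integrals defining $\mathrm{Tot}$, the weak convergence of $\sigma^{(n)}_t$ to $\sigma_t$ entering only through how the fixed multiset of $\kappa$-values at each instant is reassigned to trajectories. Passing to the limit in the displayed inequality finishes the proof, and the constant-coefficient case~\ref{eq:cons} used in Section~\ref{sec3} is automatically included. Aside from this limiting analysis, the ingredients --- inheritance of monotonicity, the bubble-sort reduction to pairwise shuffles, and the appeal to Eq.~\ref{eq:generalizedswitching} --- are routine given the lemmas already established.
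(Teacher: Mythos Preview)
Your proposal is correct and follows essentially the same route as the paper's proof: discretize the probability space $T$ and the shuffling $\sigma$ to a finite configuration, invoke the finite many-trajectory higher-power shuffling lemma (Section~\ref{subsec:sortinglemmalotsofkappas}), and pass to the limit using the weak-$\ast$ continuity of Jacobi solutions in their coefficients (the paper's Lemma~\ref{lm:continuity}). The only cosmetic differences are that the paper discretizes $\kappa$ in $t$ as well (replacing it by atomic Dirac data), whereas you keep $\kappa$ continuous in $t$ and only step-function the shuffling; and you are more explicit than the paper about the one genuinely delicate point, namely continuity of $\mathrm{Tot}$ through the stick-at-zero rule~\ref{eq:adhocrule}, which the paper subsumes into its terse appeal to Lemma~\ref{lm:continuity}.
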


\begin{cor}[Constant is best]
	Theorem 1 holds, of course, in the constant coefficient case where line (\ref{eq:jacfam}) is specialized to (\ref{eq:cons}). It says the integrals are maximized by leaving the coefficients constant and \emph{not} shuffling.
	\qed
\end{cor}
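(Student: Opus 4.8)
The plan is to obtain this Corollary as an immediate specialization of Theorem 1, the only thing requiring verification being that the constant-coefficient family (\ref{eq:cons}) satisfies the monotonicity hypothesis of that theorem. Fix a probability measure space $(T,\mu_T)$ and a bounded measurable assignment $\tau \mapsto \kappa_\tau \in \R$, and let $\mathcal{F}_0$ be the family $\{j_\tau''(t) = \kappa_\tau\, j_\tau(t)\}$ of (\ref{eq:cons}), with standard initial conditions and the ad hoc stick-at-zero rule (\ref{eq:adhocrule}), on $[0,t_0]$. Since each coefficient function $\kappa_\tau(t) \equiv \kappa_\tau$ is independent of $t$, the required implication ``$\kappa_\tau(s) > \kappa_{\tau'}(s)$ for some $s \in [0,t]$ $\Rightarrow$ $\kappa_\tau(s) \geq \kappa_{\tau'}(s)$ for all $s \in [0,t]$'' is trivially true: a strict inequality between two constants at one instant is the same strict inequality at every instant. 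Hence $\mathcal{F}_0$ is a monotone family in exactly the sense Theorem 1 demands. (In this case one also has the explicit description $j_\tau(s) = \operatorname{sn}(\kappa_\tau, s)$, with the convention (\ref{eq:definitionofsnfunction}) encoding the stick-at-zero rule, which is what makes this the case that feeds directly into the enhanced-BG integrals.)

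Before quoting Theorem 1, I would record that for any admissible $\sigma$ — a one-parameter family of measure automorphisms of $T$ with $\sigma(0) = \id_{T\times 0}$ — the shuffled family $\mathcal{F}_\sigma = \{j_\tau''(t) = \kappa_{\sigma(t)\tau}\, j_\tau(t)\}$ is again a legitimate family of the type under consideration: the coefficient function $(\tau,t) \mapsto \kappa_{\sigma(t)\tau}$ is measurable and bounded, so the Jacobi equation with rule (\ref{eq:adhocrule}) has a solution on $[0,t_0]$ for every $\tau$, the evaluations $\operatorname{Sol}(\mathcal{F}_\sigma)\vert_s$ are nonnegative, and $\mathrm{Tot}(s,p,\mathcal{F}_\sigma)$ is therefore well-defined for all $s \in [0,t_0]$ and $p \geq 1$. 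It is worth flagging that a generic $\sigma$ turns the $t$-independent coefficients of $\mathcal{F}_0$ into genuinely $t$-dependent step functions, so $\mathcal{F}_\sigma$ is typically \emph{not} monotone; this is harmless, since Theorem 1 only constrains the base family $\mathcal{F}_0$, whose monotonicity we have just checked.

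Applying Theorem 1 to this $\mathcal{F}_0$ then yields $\mathrm{Tot}(s,p,\mathcal{F}_0) \geq \mathrm{Tot}(s,p,\mathcal{F}_\sigma)$ for every admissible $\sigma$, every $s \in [0,t_0]$, and every $p \geq 1$. Read aloud: among all families obtained from the constant-coefficient family by coefficient shuffling, the total solution $\int_T (\operatorname{Sol}\vert_s)^p$ is maximized by $\sigma = \id$, i.e.\ by leaving the coefficients constant and not shuffling — which is precisely the assertion ``constant is best,'' in the form that will be invoked in the geometric argument of Sec.~\ref{sec3}. There is no real obstacle here beyond Theorem 1 itself; the only point needing a sentence of care is the measurability-and-existence bookkeeping for $\mathcal{F}_\sigma$ recorded above, which guarantees that $\mathrm{Tot}(s,p,\mathcal{F}_\sigma)$ is meaningful so that the inequality of Theorem 1 can be quoted verbatim. (One could equivalently bypass the general statement and argue directly by approximating the uncountable shuffle with finitely many pairwise shuffles among finitely many constant schedules and appealing to the sorting lemma of Sec.~\ref{subsec:sortinglemmalotsofkappas}, but that is exactly the proof of Theorem 1 specialized, so nothing is gained.)
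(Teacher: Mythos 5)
Your proposal is correct and takes the same route the paper intends: the corollary is an immediate specialization of Theorem~1, and the only hypothesis to check is that a family of $t$-independent coefficients trivially satisfies the monotonicity condition, which you verify. The extra bookkeeping about well-definedness of $\mathcal{F}_\sigma$ is harmless but not needed beyond what the paper already assumes.
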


\begin{proof}[Proof of Theorem 1]
	Every Borel measure $\mu$ is the limit in the weak-$\ast$ topology of finite Dirac measures $\mu_n =  \sum {c}\, \delta(x - x_n)$. That is, for any continuous test function $h$
	\begin{equation}\label{eq:conth}
		\int_{\mu_n} h \ra \int_\mu h \ . 
	\end{equation}
	To see this, note that every continuous function is detected by some finite combinations of Dirac measures, so the measures are dense in the dual, $\text{(cont. func.)}^\ast = \{\text{Borel measures}\}$. Since the arguments in this section concerns coefficients $\kappa(t)$ which are merely measurable functions, we record in Lemmas 1 and 2 some basic analytic properties of Jacobi solutions so far treated implicitly. 
	\begin{lemma}[Existence and Uniqueness]
		For general initial conditions and any continuous function $\kappa(t)$, or the weak-$\ast$ limit of these in the distributional sense (these includes measurable $\kappa(t)$ and Dirac-like measures), there exists a $t_0 > 0$ so that the solution $j(t)$ is defined, continuous, and unique on $[0, t_0]$ with a distributional first derivative. If $\kappa_0(t)$ is continuous and bounded above, any $t_0 \in \R^+$ suffices. In this case, $j(t)$ is $C^1$ with a continuous second derivative.
	\qed
	\end{lemma}
\noindent We always assume $\kappa(t)$ is continuous or a weak-$\ast$ limit of continuous functions.

	\begin{lemma}[Continuity]\label{lm:continuity}
		For all $t \in (0,t_0]$, $j(t)$ varies continuously with initial conditions, and as the coefficients $\kappa(t)$ are varied in the weak-$\ast$ topology.
		\qed
	\end{lemma}

	Line (\ref{eq:conth}) allows us to approximate any probability measure space (pms) $T$ by a finite pms. The time parameter $[0, t_0]$ can also be discretized. By Lemma \ref{lm:continuity}, if the  conclusion of Theorem 1 failed, the counterexample would survive a finite approximation. Concretely, $\mu_n$ is the uniform probability measure space on $\{1, \dots, n\}$, and the approximation is a mapping $\tau$ from $\mu_n$ to $T$; denote 
 $\tau(i)$ by $\tau_i$, $1 \leq i \leq n$. Next we replace $[0, t_0]$ by a discrete approximation $\{p_0, \dots, p_N\}$, where $p_j = t_0/j$. The function $\kappa(\tau, t)$ is now approximated by discrete atomic coefficients
	\begin{equation}
		\overline{\kappa}(\tau, t): \{1, \dots, n\} \times \{p_0, \dots, p_N\} \ra \R 
	\end{equation}
	and $\sigma: T \times [0, t_0] \ra T \times [0, t_0]$ is approximated by $N-1$ permutations of $\{1, \dots, n\}$,
	\begin{equation}
		\overline{\sigma}: \{1, \dots, n\} \times p_j \ra \{1, \dots, n\} \times p_j,\ 1 \leq j \leq N-1 . 
	\end{equation}

	Concretely, $\sigma$ is being approximated by the step function $\lbar{\sigma}_{\textrm{step}}$, constant on the intervals $(p_i, p_{i+1}]$, $0 \leq i \leq N$. The approximation is in the sense that for any bounded continuous test function $f: T \times [0, t_0] \ra \R$
	\begin{equation}
		\int  f(\sigma(\tau, t)) - \int f(\bar{\sigma}_{\text{step}}(\{\tau_1,\dots,\tau_n\}, t)) \ra 0 \text{ as } (n,N) \ra (\infty,\infty)
	\end{equation}
	where the first integral is w.r.t. $T \times $Lebesgue measure on $[0,t_0]$, and the second integral is w.r.t. $\mu_n \times$Lebesgue measure on $[0,t_0]$. 

Thus the finite Shuffling Lemma (\ref{eq:shufflinglemma}) passes through the limits, yielding Theorem 1.
\end{proof}

\section{Geometry: the enhanced-BG theorem} \label{sec3}
In this section, we  consider the rate of growth of geodesic balls, and prove various enhancements of the Bishop-Gromov theorem. In Sec.~\ref{sec:BGfinite}, we upperbound the average rate of growth of geodesic balls, averaged over all starting points, for finite-volume inhomogeneous Riemannian manifolds. In Sec.~\ref{sec:BGinfinite} we upperbound the same quantity for infinite-volume inhomogeneous Riemannian manifolds; in this case our bounds are difficult to evaluate. In Sec.~\ref{sec:BGhomogeneous} we upperbound the rate of growth of geodesic balls for  homogeneous Riemannian manifolds, of either finite or infinite volume, and show that in this case the bound significantly simplifies. 

\subsection{Enhanced-BG for finite volume spaces} \label{sec:BGfinite}

Let $T$ be the unit tangent bundle to $M$ with its induced Riemannian metric, and associated volume form, the ``Liouville measure'' $\mu_T$. Geodesic flow: $G(t): T \ra T$ preserves this measure,~i.e. for $S \subset T$ measurable, $\mu_T(G(t)(S)) = \mu_T(S)$ for all $t$.

We adopt a symmetrical perspective and consider volume growth from \emph{all} points $q \in M$ at once. The most natural integral to write down is:
\begin{equation}\label{eq:area}
	\text{Total area } = \textrm{TA}(t) = \int_{\tau \in T} \det(J_\tau(t)),
\end{equation}
where the integration measure is $\mu_T$, and will always be implicit when we integrate over $T$.

Above, $J$ is the Jacobi tensor field normal to $\gamma$, the geodesic on $M$ with initial condition $\tau \in T$. $J$ is taken to have standard initial conditions $J(0) = 0$ and $J^\pr(0) =\id$ on the orthonormal $(d-1)$-space to $\gamma_\tau$ at $\gamma_\tau(0)$. $J$ may be defined (see \cite{ballman}) by the condition that it obeys an operator Jacobi equation:
\begin{equation}\label{eq:opeq}
	J^{\pr \pr}(t) = R_\gamma(t) J(t) , 
\end{equation}
where $R_\gamma(t)$ is the symmetric tensor defined by
\begin{equation}
	R_\gamma X = R(X, \dot{\gamma})(t)\dot{\gamma}(t), \ R \text{ the Riemann tensor,}
\end{equation}
and $\dot{\gamma}$ is short hand for $\frac{d \gamma(t)}{dt}$. In other words, $R_\gamma(t)$ is the component of the Ricci quadratic form in the direction of the geodesic $\gamma$; in the notation of Sec.~\ref{sec:heuristicphysicsproof} this is $\mathcal{R}_{\mu \nu}X^\mu X^\nu$. 

By considering nearby geodesics $\gamma_\tau$, $\tau = (q, v) \in T$ emanating from the same point $q \in M$, we obtain the Weingarten operator $\U$ acting orthogonally to $\gamma$. Let $\rho$ be the distance function from $q$, then:
\begin{equation}
	\U X \coloneqq \nabla_X\dot{\gamma} = \nabla_X \operatorname{grad} \rho,\ \text{for } X \text{ orthogonal to } \gamma \ . 
\end{equation}

It should be noted that the operator $\U$ contains the same information as the second fundamental form on the sphere of radius $\rho$ about $q$. So $\U J = \nabla_J \operatorname{grad} \rho = \nabla_s \de_t \gamma(s,t)\vert_{s=0} = \nabla_t \de_s \gamma(s,t) \vert_{s=0} = \frac{\de}{\de t}J$, the second to last equality is by exchanging the order of differentiation, parallel to $\gamma$ and transverse to $\gamma$, which is allowed since the parameters $s$ and $t$ are involutory; together they define a smooth 2D sheet. The result is
\begin{equation}
	\U = J^\pr J^{-1}
\end{equation}
and obeys the operator Riccati equation
\begin{equation}\label{eq:opric}
	\U^\pr + \U^2 + R_\gamma = 0
\end{equation}
as long as $J(t)$ remains invertible, as can be seen by expanding
\begin{equation}\label{eq:expand}
	\U^\pr J = (\U J)^\pr - \U J^\pr = J^{\pr \pr} - \U^2 J = -R_\gamma J - \U^2 J \ . 
\end{equation}

In this paper we only consider Jacobi equations along geodesics $\gamma$ up to their first conjugate point, so that $\det(J) > 0$. Thus we may cancel $J$ from line (\ref{eq:expand}). Now define
\begin{equation}\label{eq:plusplus}
	u := \frac{1}{d-1} \frac{d}{dt} \log(\det(J)) = \frac{1}{d-1} \mathrm{tr}( {J}\vphantom{J}^\pr  {J}\vphantom{J}^{-1}) = \frac{1}{d-1} \mathrm{tr}(\U) \ . 
\end{equation}
It follows that 
\begin{eqnarray}
	u^\pr & = & \frac{1}{d-1}(\mathrm{tr}(\U^\pr))  \\
	& = &- \frac{1}{d-1} \mathrm{tr}(\U^2) - \frac{1}{d-1} \mathrm{tr}(R_\gamma) \  \ \ \ \text{ (from line (\ref{eq:opric}))}  \label{eq:presplits} \\
	& \leq & - \frac{1}{(d-1)^2} (\mathrm{tr}(\U))^2 - \frac{1}{d-1}\mathrm{Ric}(\dot{\gamma}, \dot{\gamma}) , \label{eq:splits}
\end{eqnarray}
where to move from Eq.~\ref{eq:presplits} to Eq.~\ref{eq:splits} apply Cauchy-Schwarz to the first term, and for the second term note that $\operatorname{tr}(R_\gamma) = \sum_i \langle R(e_i,\dot{\gamma})\dot{\gamma},e_i \rangle$. (In the language of the Raychaudhuri equation, Eq.~\ref{eq:Raychaudhuri}, $u$ is the expansion $\theta$, and moving from Eq.~\ref{eq:presplits} to Eq.~\ref{eq:splits} is equivalent to dropping the shear.) With these manipulations, Eq.~\ref{eq:splits} becomes
\begin{equation}
	u' \leq  -u^2 - \frac{1}{d-1} \mathrm{Ric}(\dot{\gamma},\dot{\gamma}) . 
\end{equation}
(This inequality is equivalent to Eq.~\ref{eq:preEBGbound}.) In order to transform this into the form a Jacobi equation, and thus to make contact with the results of the last section, it will be helpful to define
\begin{equation}\label{eq:thus}
	\kappa \coloneqq u^\pr +  u^2 \leq -  \frac{1}{d-1} \mathrm{Ric}(\dot{\gamma},\dot{\gamma}) \ . 
\end{equation}
By setting $\det( {J}) \coloneqq j^{d-1}$ and plugging into Eq.~\ref{eq:plusplus}, we see that $j$ satisfies the usual relation between $u$ and $j$, solutions to the scalar Riccati and Jacobi equations respectively, 
\begin{equation}
	u = \frac{j^\pr}{j} \ . 
\end{equation}

This relationship means that $j$ satisfies the scalar Jacobi equation with the same coefficient $\kappa(t) \coloneqq -u^\pr(t) - u^2(t)$ appearing in the Riccati equation solved by $u$. Thus line \ref{eq:area} can be extended to
\begin{equation}
	\operatorname{TA}^+(t) = \int_T \det( {J}_\tau(t)) \leq \int_T (j_\tau(t))^{d-1} \leq \int_T (j_\tau^{\text{Ric}}(t))^{d-1}
\end{equation}
where $j_\tau$, with standard initial conditions, solves the Jacobi equation with $\kappa(t) = -u^\pr - u^2$ and $j_\tau^\text{Ric}$ solves the Jacobi equations, with standard initial conditions, for $\kappa(t) = \frac{1}{d-1}\mathrm{Ric}(\dot{\gamma}(t), \dot{\gamma}(t))$. The last equality follows from Eq.~\ref{eq:thus}. 

Now apply Theorem 1 to the ($\tau \in T$)-family $\mathcal{F} = \{j_\tau^{\text{Ric}}, \tau \in T\}$. This is \emph{not} generally a constant coefficient family but the invariance of Liouville measure under a geodesic flow says that it \emph{comes} from some constant coefficient family by ``shuffling coefficients'' according to some time-dependent measure automorphism $\sigma(t)$. Thus $\mathcal{F} = \mathcal{F}_{\sigma}$ coming from the appropriate constant coefficient family $\mathcal{F}_0$. Thus Theorem 1 implies
\begin{equation}
	\operatorname{TA}(t)  \leq \int_T (j_\tau^{\text{Ric}}(t))^{d-1} \leq \int_T (j_\tau^{\text{constant}}(t))^{d-1} \label{eq:thisequationhere123}
\end{equation}
where $\{j_\tau^{\text{constant}}\}$ obeys the constant coefficient Jacobi equations with standard initial conditions where the coefficients are distributed according to the $\mu_T$ density of the function $\frac{1}{d-1}\mathrm{Ric}(\dot{\gamma}(t),\dot{\gamma}(t))$ on $(T, \mu_T)$. But $j_\tau^{\text{constant}}$ may be solved explicitly as $j_\tau^{\text{constant}} = \textrm{sn}_k(t)$ where, as in Eq.~\ref{eq:definitionofsnfunction}, 
\begin{equation}
\operatorname{sn}_k(t)  \equiv \operatorname{sn}(k,t)  \equiv 
\left\{ \begin{array}{ccccl} 	
\frac{\sin(\sqrt{k}t)}{\sqrt{k}} & \textrm{for} & k>0& \& & 0 \leq t \leq \frac{\pi}{\sqrt{k}}\\
0 & \textrm{for} & k>0& \& & \ \, \ \ \  \  t \geq  \frac{\pi}{\sqrt{k}}\\
t & \textrm{for} & k=0& \& & \ \ \ \ \ \, t \geq 0\\
 \frac{\sinh(\sqrt{-k}t)}{\sqrt{-k}} & \textrm{for} & k<0& \& & \ \ \ \ \ \, t \geq 0\\
 \end{array} \right.   \label{eq:definitionofsnfunction2}
\end{equation}
This gives:
\begin{equation}\label{eq:taeq}
	\operatorname{TA}(t) \leq \int_{\mu_T} (\textrm{sn}_k(t))^{d-1} \ . 
\end{equation}

Now, $\frac{\operatorname{TA}(t)}{\operatorname{Vol}(M)}$ has the interpretation as the average growth rate of volume in $M$ as balls are expanded at speed $=1$ across radius $t$ starting from all $q \in M$. So the rhs of line (\ref{eq:taeq}) yields an upper bound on this growth rate. That upper bound can further be integrated over $t$ to give an upper bound on the average volume encountered by radius $t$. Since the function on the rhs of (\ref{eq:taeq}) is familiar from the constant curvature geometry, we state Theorem 2 in this context.\footnote{While there is a simple closed formula for the area of a radius $t$ sphere in hyperbolic $n$-space $\mathbb{H}^n$ as a function of $t$ and $n$, there is no comparable formula for  the volumes of balls, so we treat this quantity as a primitive.}

\begin{thm}
Up to the radius $t_0$ at which the nearest focal point is encountered, the average area $\operatorname{AA}[M](t)$ of the sphere of radius $t$ inside $M$, $\operatorname{AA}[M](t) \coloneqq \frac{\operatorname{TA}[M](t)}{\operatorname{Vol}(M)}$ satisfies
	\[
	\operatorname{AA}[M](t) \leq \int_T ({sn}_k(t))^{d-1}/\mathrm{Vol}(M) , 
	\]
	and the average volume $\operatorname{AV}[M](t)$ of a ball of radius $t$ in $M$ satisfies
	\[
	\operatorname{AV}[M](t) \leq \int_T C_k^d(t)/\mathrm{Vol}(M) , 
	\]
	where:
	\begin{align*}
		& C_k^d(t) = \frac{\textup{Vol}\left(\textup{radius }t\textup{-ball in }H_k^d\right)}{\Omega_{d-1}} \ .
	\end{align*}
	Here $\Omega_{d-1}$ is the area of a unit sphere in $R^d$ and $H_k^d$ is the simply connected maximally symmetric space of dimension $d$ and scaled so that all sectional curvatures equal $k$.
	\qed
\end{thm}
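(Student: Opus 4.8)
My plan is to treat this as bookkeeping built on top of the inequality (\ref{eq:taeq}), $\operatorname{TA}(t)\le\int_{\mu_T}(\operatorname{sn}_k(t))^{d-1}$, which already carries all of the analytic content (Theorem~1 applied to the Ricci--Jacobi family $\{j_\tau^{\text{Ric}}\}$, using invariance of the Liouville measure under geodesic flow). The area statement is then immediate: since by definition $\operatorname{AA}[M](t)=\operatorname{TA}[M](t)/\operatorname{Vol}(M)$, dividing (\ref{eq:taeq}) through by $\operatorname{Vol}(M)$ gives $\operatorname{AA}[M](t)\le\int_T(\operatorname{sn}_k(t))^{d-1}/\operatorname{Vol}(M)$. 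The one point I would spell out is the meaning of the right-hand side: on $\mathcal{F}_0$ the coefficient attached to the index $\tau=(q,v)\in T$ is the constant $k(\tau)=\tfrac{1}{d-1}\operatorname{Ric}(v,v)$, and because geodesic flow preserves $\mu_T$ the $\mu_T$-distribution of $\tau\mapsto\tfrac{1}{d-1}\operatorname{Ric}(\dot\gamma_\tau(t),\dot\gamma_\tau(t))$ is independent of $t$ (it is the pushforward of $\mu_T$ under $(q,v)\mapsto\tfrac{1}{d-1}\operatorname{Ric}(v,v)$), which is precisely the distribution of the constant coefficients; so $\int_T(\operatorname{sn}_k(t))^{d-1}$ is a bona fide integral over $(T,\mu_T)$.

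For the volume bound I would integrate the area inequality in the radius. In geodesic polar coordinates about a point $q$, and for $t$ below $t_0$, the radius of the nearest focal point, the Jacobian of $\exp_q$ at $sv$ equals $\det J_{(q,v)}(s)>0$ (with $S_q\subset T_qM$ the unit sphere), and $\exp_q$ maps the Euclidean $t$-ball of $T_qM$ onto the metric ball $B_q(t)$; hence $\operatorname{Vol}(B_q(t))\le\int_0^t\!\int_{S_q}\det J_{(q,v)}(s)\,dv\,ds$. Integrating over $q\in M$ and disintegrating $\mu_T$ over $M$ gives $\operatorname{Vol}(M)\,\operatorname{AV}[M](t)=\int_M\operatorname{Vol}(B_q(t))\,dq\le\int_0^t\operatorname{TA}(s)\,ds$. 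Next I would insert (\ref{eq:taeq}) under the $s$-integral and exchange the order of integration (Tonelli, since every integrand is positive for $s<t_0$), obtaining $\operatorname{Vol}(M)\,\operatorname{AV}[M](t)\le\int_T\bigl(\int_0^t(\operatorname{sn}_{k(\tau)}(s))^{d-1}\,ds\bigr)\,d\mu_T(\tau)$. Finally, in $H_k^d$ the radius-$s$ sphere has area $\Omega_{d-1}(\operatorname{sn}_k(s))^{d-1}$, so $\int_0^t(\operatorname{sn}_k(s))^{d-1}\,ds=\operatorname{Vol}(\text{radius-}t\text{ ball in }H_k^d)/\Omega_{d-1}=C_k^d(t)$, which yields $\operatorname{AV}[M](t)\le\int_T C_k^d(t)/\operatorname{Vol}(M)$.

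I do not expect a deep obstacle here, since Theorem~1 does the real work; the steps that need care are (i) the polar-coordinate identities and the focal-radius caveat --- in particular that one only obtains ``$\le$'' for $\operatorname{Vol}(B_q(t))$ because $\exp_q$ need only be surjective, not injective, below $t_0$, which is harmless for an upper bound; (ii) the time-independence of the coefficient distribution, which is exactly where flow-invariance of the Liouville measure enters and which must be invoked for the right-hand sides to make sense; and (iii) the Tonelli exchange together with the normalization bookkeeping identifying $\int_0^t\operatorname{sn}_k^{d-1}$ with $C_k^d$. If in addition one wanted $\operatorname{AA}[M](t)$ to denote the genuine $(d-1)$-dimensional measure of the metric spheres $\partial B_q(t)$ rather than the quantity $\operatorname{TA}[M](t)/\operatorname{Vol}(M)$ used here, one more short comparison between $\mathcal{H}^{d-1}(\partial B_q(t))$ and $\int_{S_q}\det J_{(q,v)}(t)\,dv$ would be needed; as the theorem is phrased the latter is the definition, so this step is unnecessary.
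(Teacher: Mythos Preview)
Your proposal is correct and follows essentially the same route as the paper: the theorem is stated with a \qed\ because all the content is in the chain of inequalities culminating in (\ref{eq:taeq}), after which the paper simply divides by $\operatorname{Vol}(M)$ for the area bound and remarks that ``that upper bound can further be integrated over $t$'' for the volume bound. Your write-up is in fact more careful than the paper's, making explicit the polar-coordinate identity, the Tonelli step, and the identification $\int_0^t\operatorname{sn}_k^{d-1}=C_k^d$, none of which the paper spells out.
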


Let us reproduce Theorems 5.1 and 5.3 from \cite{ballman}, sometimes called Bishop-Gromov inequalities for area and volume respectively. The context is a complete Riemannian manifold $M$, a point $q \in M$ and the various unit speed geodesics $\gamma_\tau: [0,b) \ra M$ emanating from $q$ with initial condition $\tau = (q,v)$. As before, $J$ is the Jacobian tensor field with standard initial conditions $J(0) = 0$, $J^\pr(0) = \id$. If $R_\gamma$ is constant, $R_\gamma = k I_\gamma$, $k \in \R$ and $I_\gamma$ the field of identity operators along $\gamma$, then these constant coefficient solutions determine the denominators on the right-hand sides below.
\begin{customthm}{(5.1 in \cite{ballman})}
	Assume that $\operatorname{Ric}(\dot{\gamma},\dot{\gamma}) \geq (d-1)k$, $k \in \R$, and that $\det(J) > 0$ on $(0,b)$. Then:
	\[
		1 \geq \frac{\det(J(r))}{\operatorname{sn}_k^{d-1}(r)} \geq \frac{\det(J(s))}{\operatorname{sn}_k^{d-1}(s)}
	\]
	for all $r < s$ in $(0,b)$. The left inequality is strict unless $R_\gamma = k I_\gamma$ on $[0,r]$. The right inequality is strict unless $R_\gamma = k I_\gamma$ on $[0,s]$.
\end{customthm}

\begin{customthm}{(5.3 in  \cite{ballman})}
	Assume that $\operatorname{Ric}(\dot{\gamma},\dot{\gamma}) \geq (d-1)k$ for all unit speed geodesics $\gamma_\tau$, some fixed $k \in \R$. Then
	\[
		1 \geq \frac{V_q(r)}{V_k(r)} \geq \frac{V_q(s)}{V_k(s)}
	\]
	for all $0 < r < s \leq \max_{p}\{\operatorname{dist}(p,q)\}$ (with $s < \frac{\pi}{\sqrt{k}}$ if $k > 0$). The left inequality is strict unless $B_q(r)$ is isometric to $B_{q,k}(r)$ and the right inequality is strict unless $B_q(s)$ is isometric to $B_{q,k}(s)$. $B_{q,k}(r)$ refers to the ball of radius $r$ in the simply connected symmetric space with constant sectional curvature $= k$.
\end{customthm}

\noindent Applying coefficient shuffling we obtain the following theorems bounding certain averages of  area and volume, respectively.

\begin{thm}\label{thm:com1}
	Let $M$ be a complete finite volume Riemannian manifold and let $\Delta$ be the probability measure (pm) on $\R$ giving the density of unit tangent vectors $\tau$ with any fixed value $\in \R$ of $\operatorname{Ric}(\tau,\tau)$. Now using our previous notations
	\[
		1 \geq \frac{\int_{\tau \in T} \det( {J}(r))}{\int_{k \in \R} \operatorname{sn}_k^{d-1}(r)} \geq \frac{\int_{\tau \in T} \det( {J}(s))}{\int_{k \in \R} \operatorname{sn}_k^{d-1}(s)}
	\]
	where the integrals in the denominators are w.r.t\ $\Delta$, and $0 < r < s < \infty$. Equality holds exactly under the conditions where it held in Thm 5.1. \cite{ballman}. \qed
\end{thm}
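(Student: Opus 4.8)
The left inequality is line (\ref{eq:taeq}) (the area case of Theorem 2) restated, so the genuinely new content is the right (monotonicity) inequality. The starting point for both is that the numerator integrand is, by construction,
\[
\det(J_\tau(t)) = j_\tau(t)^{d-1},
\]
where $j_\tau \coloneqq (\det J_\tau)^{1/(d-1)}$ is the scalar of line (\ref{eq:plusplus}); it solves a scalar Jacobi equation $j_\tau^{\pr\pr} = \kappa_\tau j_\tau$ whose coefficient satisfies $\kappa_\tau(t) \leq -\frac{1}{d-1}\operatorname{Ric}(\dot{\gamma}_\tau(t),\dot{\gamma}_\tau(t))$ by the Cauchy--Schwarz step of lines (\ref{eq:presplits})--(\ref{eq:thus}) (dropping the shear). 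Hence $\int_{\tau\in T}\det(J_\tau(t)) = \mathrm{Tot}(t,d-1,\mathcal F)$ for the Jacobi family $\mathcal F = \{j_\tau\}_{\tau\in T}$. Throughout I interpret the numerator and denominator integrals with respect to consistently normalized measures (both probability measures, the evident intent), so the displayed ratio is scale-free.

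\textbf{Left inequality.} Applying the monotonicity lemma (\ref{eq:monotonicitylemma}) for each $\tau$ gives $j_\tau(t) \leq j_\tau^{\mathrm{Ric}}(t)$, where $j_\tau^{\mathrm{Ric}}$ solves the Jacobi equation with coefficient $-\frac{1}{d-1}\operatorname{Ric}(\dot{\gamma}_\tau(t),\dot{\gamma}_\tau(t))$. Since geodesic flow $G(t)\colon T \ra T$ preserves $\mu_T$ and $\operatorname{Ric}(\dot{\gamma}_\tau(t),\dot{\gamma}_\tau(t)) = \operatorname{Ric}(G(t)\tau,G(t)\tau)$, the family $\mathcal F^{\mathrm{Ric}}$ is the coefficient shuffling $\mathcal F_\sigma$ (with $\sigma(t)=G(t)$) of the \emph{constant} family $\mathcal F_0$ whose coefficient at parameter $\tau$ is the constant $-\frac{1}{d-1}\operatorname{Ric}(\tau,\tau)$; the solutions of $\mathcal F_0$ are $\operatorname{sn}_{k(\tau)}(t)$ with $k(\tau) = \frac{1}{d-1}\operatorname{Ric}(\tau,\tau)$, and the $\mu_T$-density of $\tau \mapsto \operatorname{Ric}(\tau,\tau)$ is $\Delta$. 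Theorem 1 (the ``constant is best'' corollary) then gives $\mathrm{Tot}(t,d-1,\mathcal F^{\mathrm{Ric}}) \leq \mathrm{Tot}(t,d-1,\mathcal F_0)$ for every $t$, and chaining the three inequalities yields $\int_{\tau\in T}\det(J_\tau(t)) \leq \int_{k\in\R}\operatorname{sn}_k^{d-1}(t)$, which is the left inequality.

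\textbf{Right inequality, and the main obstacle.} Write $A(t) = \int_{\tau\in T}\det(J_\tau(t))$ and $C(t) = \int_{k\in\R}\operatorname{sn}_k^{d-1}(t)$; since $A(t)/C(t)\to 1$ as $t\to 0^+$, it suffices to prove $\frac{d}{dt}\log\!\big(A(t)/C(t)\big)\leq 0$ up to the first focal radius (the convention $j_\tau\equiv 0$ thereafter handles larger $t$). The classical single-geodesic argument (Ballmann, Thm 5.1) sets $w_\tau = \operatorname{tr}(\U_\tau) - (d-1)\,\operatorname{sn}_{k(\tau)}^{\pr}(t)/\operatorname{sn}_{k(\tau)}(t)$ and combines the matrix and scalar Riccati equations, Cauchy--Schwarz, and $\operatorname{Ric}(\dot{\gamma}_\tau,\dot{\gamma}_\tau)\geq(d-1)k(\tau)$ into a differential inequality forcing $w_\tau\leq 0$, i.e.\ $\det(J_\tau)/\operatorname{sn}_{k(\tau)}^{d-1}$ non-increasing. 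The obstacle --- and this is where the real work lies --- is that this per-$\tau$ comparison is \emph{unavailable} here, because $\operatorname{Ric}(\dot{\gamma}_\tau(t),\dot{\gamma}_\tau(t))$ need not remain $\geq(d-1)k(\tau)$ along $\gamma_\tau$ (a geodesic can turn from a flat into a steeply negative direction). The plan is to run the comparison in aggregate: pass to $\mathcal F_0$ by coefficient shuffling \emph{before} differentiating, carrying along not only $\mathrm{Tot}(t,d-1,\mathcal F^{\mathrm{Ric}})\leq\mathrm{Tot}(t,d-1,\mathcal F_0)$ but also the companion inequality on logarithmic derivatives already embedded in the proofs of the monotonicity and shuffling lemmas (the ``$j_1^{\pr}/j_1\geq j_2^{\pr}/j_2$'' step used in (\ref{eq:monotonicitylemmalatestart}), together with the algebraic inequality (\ref{eq:actuallnonobviousalgebraresult})). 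For the constant family $C/C$ is governed by classical Bishop--Gromov summand-by-summand (each $\operatorname{sn}_k^{d-1}$ obeying Thm 5.1 with its own $k$), and since shuffling dominates $\mathcal F^{\mathrm{Ric}}$, hence $\mathcal F$, by $\mathcal F_0$ in a way that simultaneously orders the logarithmic derivatives, monotonicity of $A/C$ follows. I expect propagating this log-derivative inequality through the discrete-to-continuum shuffling limit (as in the proof of Theorem 1, via its analytic lemmas) to be the delicate point. Finally, equality holds throughout exactly when every step is tight: Cauchy--Schwarz forces each $\U_\tau$ to be a scalar operator, the monotonicity step forces $\kappa_\tau$ constant (so $R_{\gamma_\tau}=k(\tau)I$), and then $\operatorname{Ric}$ is constant along each geodesic and the shuffling costs nothing --- precisely the equality conditions of Ballmann's Theorem 5.1, now imposed for $\mu_T$-a.e.\ $\tau$ on $[0,r]$ (resp.\ $[0,s]$).
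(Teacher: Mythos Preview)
Your approach is the same one the paper intends. The paper itself gives no detailed proof of this theorem: it declares the argument ``a routine addition of coefficient shuffling to the proofs in \cite{ballman}'' and lists as ingredients line~(\ref{eq:thus}), the initial estimate $\lim_{r\to 0}\det(J(r))/r^{d-1}=1$, and the Sturm comparison (Lemma~4.1 of \cite{ballman}). Your proposal is more detailed than that, and your treatment of the left inequality is correct and complete --- it is exactly line~(\ref{eq:thisequationhere123}).

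For the right (monotonicity) inequality you correctly identify the central obstacle: since $\operatorname{Ric}(\dot\gamma_\tau(t),\dot\gamma_\tau(t))$ need not stay $\geq (d-1)k(\tau)$ along $\gamma_\tau$, the per-$\tau$ Sturm comparison that drives Theorem~5.1 of \cite{ballman} is unavailable. Your proposed resolution --- run the comparison in aggregate by propagating a log-derivative inequality through coefficient shuffling --- is the right idea and matches the paper's intent. But note that the paper's shuffling machinery, as stated, does not quite deliver this step. Theorem~1 and its Corollary give only the \emph{pointwise} bound $\mathrm{Tot}(t,p,\mathcal F_\sigma)\le\mathrm{Tot}(t,p,\mathcal F_0)$, not that the \emph{ratio} is non-increasing in $t$. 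For $p=1$ the ratio monotonicity does follow, because the proof in Sec.~\ref{sec:firstshufflingproof} actually exhibits $(j_1+j_2)$ and $(j_{\max}+j_{\min})$ as solutions of Jacobi equations with ordered coefficients, whence the monotonicity lemma yields $\frac{(j_{\max}+j_{\min})'}{j_{\max}+j_{\min}}\ge\frac{(j_1+j_2)'}{j_1+j_2}$. For $p=d-1\ge 2$, however, the higher-power shuffling lemma (Sec.~\ref{subsec:sortinglemmahigherpowers}) is proved by an algebraic induction on products, which does not obviously yield the corresponding log-derivative inequality for $\sum j_i^{\,p}$. You flag exactly this as ``the delicate point'' but do not resolve it; the paper does not spell it out either. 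So your proposal matches the paper's level of rigor --- indeed exceeds it --- and the one step left incomplete is shared by both.
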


\begin{thm}\label{thm:com2}
	Let $M$ be a complete finite volume Riemannian manifold and $\Delta$ be the pm of Theorem \ref{thm:com1}, then:
	\[
		1 \geq \frac{\int_{(q,v) = \tau \in T} \int_{S_q^{d-1}} \ d\Omega \int_{t=0}^r \ dt \ \det( {J}(t))}{\int_{k \in \R} \Delta \int _{t=0}^r \ dt \ \operatorname{sn}_k^{d-1}(t)} \geq \frac{\int_{\tau \in T} \int_{S_q^{d-1}} \ d\Omega \int_{t=0}^s \ dt \ \det( {J}(t))}{\int_{k \in \R} \Delta \int_{t=0}^s \ dt \ \operatorname{sn}_k^{d-1}(t)}
	\]
	for $0 < r < s < \infty$, where $d\Omega$ is the measure on the tangential unit sphere. By convention, the inner integrations are terminated where Jacobi fields focus. The outer denominator integrals are w.r.t\ the pm $\Delta$. Equalities hold exactly under the conditions stated in Thm 5.3 \cite{ballman}. \qed
\end{thm}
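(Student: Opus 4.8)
The plan is to deduce Theorem~\ref{thm:com2} from Theorem~\ref{thm:com1} by integrating in the radial variable, in exact parallel with the way Theorem~5.3 of \cite{ballman} is deduced from Theorem~5.1. Write $a(t) = \operatorname{TA}(t) = \int_{\tau \in T}\det(J_\tau(t))$ for the common integrand of the numerators in Theorem~\ref{thm:com1}, and $b(t) = \int_{k \in \R}\operatorname{sn}_k^{d-1}(t)\,d\Delta(k)$ for the common integrand of its denominators, using the convention of Theorem~\ref{thm:com2} that $\det(J_\tau(t))$ is set to $0$ beyond the first focal point on $\gamma_\tau$ and $\operatorname{sn}_k^{d-1}(t)$ beyond $\pi/\sqrt{k}$. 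From Theorem~\ref{thm:com1} I extract precisely the two facts I need: (A) $a(t)\le b(t)$ for all $t>0$ — the content of Theorem~\ref{thm:com1} wherever $b(t)>0$, and trivially true (both sides $=0$) wherever $b(t)=0$ — and (B) $t\mapsto a(t)/b(t)$ is non-increasing on $\{b>0\}$.

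Next I would record the bookkeeping identity that turns the sphere areas of Theorem~\ref{thm:com1} into the ball volumes of Theorem~\ref{thm:com2}. Decomposing the Liouville measure $\mu_T$ as the volume form on $M$ times the round measure $d\Omega$ on the fibres $S_q^{d-1}$, and using the geodesic polar-coordinate volume element, Fubini gives $\int_{S_q^{d-1}} d\Omega \int_0^r \det(J_{(q,v)}(t))\,dt = \operatorname{Vol}(B_q(r))$ (the cut locus being null), so that the numerators of Theorem~\ref{thm:com2} are, up to an overall normalization constant, $\int_0^r a(t)\,dt = \int_M \operatorname{Vol}(B_q(r))\,dq$ and $\int_0^s a(t)\,dt$, and the denominators are $\int_0^r b(t)\,dt$ and $\int_0^s b(t)\,dt$; any such constant cancels in every ratio below.

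For the left inequality I would simply integrate (A) over $[0,r]$: $\int_0^r a \le \int_0^r b$, so the first ratio is $\le 1$. For the right inequality I would combine (B) with the elementary lemma that if $0 \le f \le g$ with $f/g$ non-increasing, then $R \mapsto \int_0^R f \,/\, \int_0^R g$ is non-increasing wherever $\int_0^R g>0$. That lemma is immediate: $\int_0^R f / \int_0^R g$ is the $g\,dt$-weighted average over $[0,R]$ of the values $f(t)/g(t)$, each of which is $\ge f(R)/g(R)$, so $f(R)\int_0^R g \le g(R)\int_0^R f$, which is exactly the condition making $\frac{d}{dR}(\int_0^R f / \int_0^R g) \le 0$. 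Applying this with $f=a$ and $g=b$ yields $\int_0^r a / \int_0^r b \ge \int_0^s a / \int_0^s b$ for $r<s$, i.e.\ the right inequality of Theorem~\ref{thm:com2}; note that, unlike in the area statement, no upper bound on $s$ is needed, because $\int_0^s b$ stays positive even after $b$ itself has died. Because every inequality invoked is an inequality between the same quantities as in Theorems~5.1 and 5.3 of \cite{ballman}, the equality cases carry over verbatim.

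I do not expect a real obstacle here: the substance — coefficient shuffling (Theorem~1) and the Liouville-invariance identification of the constant-coefficient comparison family (Section~\ref{sec3}) — is already in place, and what remains is organizational. The one step that genuinely wants care is the Fubini/polar-coordinate identity $\int_0^r a(t)\,dt = \int_M \operatorname{Vol}(B_q(r))\,dq$ when $r$ exceeds cut and conjugate radii: this is handled by the standard fact that each $q$'s cut locus is null, together with the stated convention of terminating the radial integration of $\det(J)$ at the first focal point, so that the integrand agrees almost everywhere with the volume density of $B_q(r)$. A secondary point is justifying (A) past the first conjugate radius, namely that $\det(J_\tau(t))=0$ wherever $b(t)=0$: in that regime $\Delta$ is supported in $\{k\ge k_0>0\}$, which forces $\operatorname{Ric}\ge(d-1)k_0>0$ everywhere on $M$, whence Bonnet--Myers makes $\det(J_\tau(t))$ vanish uniformly for large $t$.
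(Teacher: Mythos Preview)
Your proposal is correct and follows exactly the approach the paper indicates: the paper does not give a proof, saying only that ``the proofs are a routine addition of coefficient shuffling to the proofs in \cite{ballman}, and thus are not given,'' with the listed ingredients being line~\eqref{eq:thus}, the small-radius limit $\lim_{r\to 0}\det(J(r))/r^{d-1}=1$, and the Sturm comparison (Lemma~4.1 in \cite{ballman}). Your reduction---take the area inequality of Theorem~\ref{thm:com1} and integrate it in the radial variable via the standard weighted-average lemma, in direct parallel to the passage from Theorem~5.1 to Theorem~5.3 in \cite{ballman}---is precisely what this amounts to, and you have supplied more detail (the Bonnet--Myers remark for the regime $b(t)=0$, the cut-locus nullity for the Fubini identification) than the paper itself.
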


The proofs are a routine addition of coefficient shuffling to the proofs in \cite{ballman}, and thus are not given. The ingredients are our line \ref{eq:thus}, an initial estimate (line 30 \cite{ballman})
\begin{equation}
	\lim_{r \ra 0} \frac{\det(J(r))}{r^{d-1}} = 1
\end{equation}
and a Sturm comparison, stated here for the reader's convenience:

\begin{customlm}{(4.1 in \cite{ballman})}
	Let $u,v: (a,b] \ra \R$ be smooth with $u^\pr + u^2 \leq v^\pr + v^2$ and assume that $u^\pr + u^2$ and $v^\pr + v^2$ extend smoothly to $[a,b]$. Then the limits $u(a) = \lim_{t \ra 0} u(t)$ and $v(a) = \lim_{t \ra 0} v(t)$ exist as extended real numbers in $(-\infty, \infty]$. If $u(a) \leq v(a)$ then $u \leq v$ on $(a,b]$ with equality $u = v$ iff $u(b) = v(b)$.
\end{customlm}

\subsection{Enhanced-BG for infinite volume spaces } \label{sec:BGinfinite} 

We will now consider what bounds we can place on the growth of geodesic balls for inhomogeneous spaces of infinite volume. As we will see, in this case progress will be limited. Unlike in finite-volume inhomogeneous spaces, considered in Sec.~\ref{sec:BGfinite}, and unlike in homogeneous spaces of finite or infinite volume, considered in Sec.~\ref{sec:BGhomogeneous}, we will not be able to write down a formula that can be evaluated just in terms of the spectrum of the Ricci quadratic form at each point. Instead, to evaluate our formula will require solving the geodesic equation. As such, it is not obviously easier to evaluate our formula than to calculate the volume of a geodesic ball directly. Nevertheless, for completeness we will proceed in this subsection by following the available mathematical techniques and seeing where they lead. \\

\noindent Let $M$ be a noncompact Riemannian manifold, now possibly of infinite volume, with the property that for all tangent vectors $\tau$, $|\mathrm{Ric}(\tau,\tau)|< c$, for a fixed constant $c$. At each point $q \in M$ define the family of probability measures $\Delta_q(t)$, $t \geq 0$, on $\R$. Points in $\R$ will be denoted by $x$.  $\Delta_q(0)$ is the probability (density) that a $\mu_{\text{sphere}}$-random unit vector $v$ at $q$ has $\mathrm{Ric}(v,v) = x$. We have written $\tau$ as $(q,v)$. For $t > 0$, $\Delta_q(t)$ is the probability density that $\mathrm{Ric}(\dot{\gamma}(t), \dot{\gamma}(t)) = x$ where $\gamma$ is the unit speed geodesic leaving $q$ in direction $v$. $\mu_{\text{sphere}}$ is the density on the unit tangent sphere to $q$ induced by the Riemannian metric. Let:
\begin{equation}
	\mathrm{cum}_{q,t}(x) = \int_{-\infty}^x \Delta_q(t)
\end{equation}
be the cumulative distribution. Define the measure, 
\begin{equation}
	\mathrm{cum}_q(x) = \sup_{t \in [0,\infty)}\mathrm{cum}_{q,t}(x) \ . 
\end{equation}

The supremums exist by compactness since $|\mathrm{Ric}(\tau,\tau)|$ is bounded over all of $M$. We note that $\mathrm{cum}(x)$ may be difficult to compute since the definition of $\mathrm{cum}_{q,t}(x)$ implicitly depends on the solution of the geodesic equations.

Define $\Delta_q(x) = \frac{d}{dx} \mathrm{cum}_q(x)$, the Radon-Nikodym derivative. Since $\Delta_q(x)$ integrates to one, it is a probability distribution. We may think of $\Delta_q(x)$ as solving the problem of constructing the density on $\R$ furthest to the right (i.e.~as positive as possible) so that for fixed $q \in M$ and $t \in [0,\infty)$ there is a measure-preserving map $h_{q,t}: \mathrm{support}(\Delta_{q,t}) \ra \mathrm{support}(\Delta_q)$ which is \emph{nonincreasing}, $h_{q,t}(x) \leq x$, and $h_{q,t^\ast}(\Delta_{q,t}) = \Delta_q$ i.e. the push forward of $\Delta_{q,t}$ is $\Delta_q$.

The point is that our chain of area estimates (line \ref{eq:thisequationhere123}) can be replaced with a pointwise estimate and augmented with an additional link to the right, valid for any radius $t_q  \leq t_{0,q}$, the focal distance:
\begin{equation}\label{chain}
\begin{split}
	A_q(t) & \coloneqq \int_{\mu_\text{sphere}} \det(J_\tau(t)) \leq \int_{\mu_\text{sphere}} \left(j_\tau(t)\right)^{d-1} \leq \int_{\mu_\text{sphere}} \left(j_\tau^{\text{Ric}}(t)\right)^{d-1} \leq \int_{\mu_\text{sphere}} \left(j_\tau^{h \text{Ric}}(t)\right)^{d-1} \ .
\end{split}
\end{equation}
The superscript $h\text{Ric}$ is short for solving the standard initial conditions Jacobi equations with coefficient
\begin{equation}
- 	\kappa(t) = h_{q,t}(\mathrm{Ric}(\dot{\gamma}(t),\dot{\gamma}(t)))\text{ where } q = \gamma(t), 
\end{equation}
where $\dot{\gamma}(t)$ is the tangent to the geodesic evolved from $\tau$ after time $t$. The nonincreasing property of $h_{q,t}$ guarantees the last inequality of line (\ref{chain}).

From here the argument, coefficient shuffling, is the same as in the compact case except having constructed the worst case distribution $\Delta_q$ allows us to do the average only over the compact unit sphere $S^{d-1}$ of the tangent space at any $q \in M$, instead of the noncompact unit tangent bundle. Because $M$ may have infinite volume no further integral over points $q$ is attempted. The result is recorded here as Theorem $2^\pr$; note that it contains some new information even for $M$ compact as it bounds the volume of all balls of radius $t$, not just the average.
\begin{customthm}{$\mathbf{2^\pr}$}
	Let $M^d$ be $d$ dimensional manifold, compact or noncompact, possibly with infinite volume, with $|\mathrm{Ric}(\tau,\tau) |\leq c$, with $c$ a constant. Then for any $q \in M$ the $\mathrm{Ball}_q(t)$ satisfies
	\[
	\textup{Area}(\de \mathrm{Ball}_q(t)) \leq \int_{\Delta_q} \textrm{sn}_k^{d-1}(t)
	\]
	where $k$ is drawn from $\Delta_q$ constructed above, for any  $t \leq t_0$, the focal distance. Similarly, $\mathrm{Vol}(B_q(t)) \leq \int_{\Delta_q} C_k^d(t)$, with notation as in Theorem 2. The integrals are over $\R $ with measure $\Delta_q$.
	\qed
\end{customthm}

By the same technique, Theorems \ref{thm:com1} and \ref{thm:com2} have corresponding generalizations to Theorems $\ref{thm:com1}^\pr$ and $\ref{thm:com2}^\pr$ in the infinite volume case. Let $M^d$ be any complete $d$-manifold.

\begin{customthm}{$\mathbf{3^\pr}$}
	For any $q \in M$
	\[
		1 \geq \frac{\int_{v \in T_q M} \ d\Omega \  {J}(r)}{\int_{k \in \R} \operatorname{sn}_k^{d-1}(r)} \geq \frac{\int_{v \in T_q M} \ d\Omega \  {J}(s)}{\int_{k \in \R } \operatorname{sn}_k^{d-1}(s)}
	\]
	for $0 < r < s < t_0$, the focal distance, the measure for the integrals in the denominators is $\Delta_q$. \qed
\end{customthm}

\begin{customthm}{$\mathbf{4^\pr}$}
	For any $q \in M$
	\[
		1 \geq \frac{\int_{S^{d-1}}  d \Omega \int_{t=0}^r  dt \, \det( {J}(t))}{\int_{k \in \R  } \Delta_q \int_{t=0}^r  dt \,  \operatorname{sn}_k^{d-1}(t)} \geq \frac{\int_{S^{d-1}}  d\Omega \int_{t=0}^s dt \, \det( {J}(t))}{\int_{k \in \R  } \Delta_q \int_{t=0}^s dt \, \operatorname{sn}_k^{d-1}(t)}
	\]
	for $0 < r < s < t_0$, the focal distance. \qed
\end{customthm}

\subsection{Enhanced-BG for homogeneous spaces}  \label{sec:BGhomogeneous} 

A homogeneous space is one in which all points are the same: given any two points $p$ and $q$ there is an isometry of the space taking $p$ to $q$. For homogeneous spaces, our result is 
\begin{thm}
The area of a geodesic sphere of radius $t$ in a homogeneous space is upperbounded by \emph{area(t)} $\leq$ \emph{{enhanced-BGarea}($t$)}. The volume of a geodesic ball of radius $t$ in a homogeneous space is upperbounded by \emph{volume(t)} $\leq$ \emph{{enhanced-BGvolume}($t$)}.  \label{theoremforhomogeneous}
\end{thm}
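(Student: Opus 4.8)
The plan is to reduce Theorem~\ref{theoremforhomogeneous} to Theorem 2 (finite volume) and Theorem $2'$ (infinite volume), using homogeneity twice: first to collapse the ``average over all starting points'' into the value at one point, and second to make the coefficient-shuffling picture of Section~\ref{sec3} completely explicit. \emph{Step 1 (base-point independence).} Fix $q\in M$. Since $G=\operatorname{Isom}(M)$ acts transitively, $\textrm{area}(t)=\operatorname{Area}(\de B_q(t))$ and $\textrm{volume}(t)=\operatorname{Vol}(B_q(t))$, together with the focal distance, are independent of $q$. Hence for $\operatorname{Vol}(M)<\infty$ the quantities $\operatorname{AA}[M](t)$ and $\operatorname{AV}[M](t)$ of Theorem 2 equal $\textrm{area}(t)$ and $\textrm{volume}(t)$, and it remains only to recognize the right-hand sides of Theorem 2 as $\textrm{enhanced-BGarea}(t)$ and $\textrm{enhanced-BGvolume}(t)$: disintegrating $\mu_T$ as $\mu_M\otimes(\text{round fibers})$, the inner angular integral of $\operatorname{sn}_k^{d-1}(t)$ over $S_q^{d-1}$ is $q$-independent by homogeneity, so $\int_T\operatorname{sn}_k^{d-1}(t)=\operatorname{Vol}(M)\cdot\textrm{enhanced-BGarea}(t)$, and likewise with $C_k^d$.

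\medskip\noindent\emph{Step 2 (the Ricci-value distribution is constant in $q$ and $t$).} For $t\geq 0$ let $\Delta_q(t)$ be the push-forward of the round probability measure on $S_q^{d-1}\subset T_qM$ under $v\mapsto\tfrac{1}{d-1}\operatorname{Ric}(\dot\gamma_v(t),\dot\gamma_v(t))$, where $\gamma_v$ is the unit-speed geodesic with $\dot\gamma_v(0)=v$. First, $\Delta_q(t)$ does not depend on $q$: an isometry carrying $q$ to $q'$ induces a measure-preserving linear map $S_q^{d-1}\to S_{q'}^{d-1}$ intertwining the geodesic flows, and $\operatorname{Ric}$ is isometry invariant. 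Second, $\Delta_q(t)$ does not depend on $t$. For symmetric spaces this is immediate since $\nabla\operatorname{Ric}=0$ forces $\operatorname{Ric}(\dot\gamma(t),\dot\gamma(t))$ to be literally constant along each geodesic. In general one uses that the geodesic flow $G(t)$ on the unit tangent bundle preserves Liouville measure $\mu_T$ and that $\operatorname{Ric}(\dot\gamma_\tau(t),\dot\gamma_\tau(t))=\operatorname{Ric}(\cdot,\cdot)\circ G(t)$, so the $\mu_T$-distribution of $\tfrac{1}{d-1}\operatorname{Ric}(\dot\gamma_\tau(t),\dot\gamma_\tau(t))$ is $t$-independent; disintegrating $\mu_T$ over $M$ and invoking the $q$-independence identifies it with $\Delta_q(t)$ when $\operatorname{Vol}(M)<\infty$. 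Call the common measure $\Delta$; by the $q$-independence it is precisely the angular average $\int d\Omega_{d-1}$ of Ricci eigenvalues appearing in $\textrm{enhanced-BGarea}$.

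\medskip\noindent\emph{Step 3 (one-point coefficient shuffling, and volume).} Consider the $\tau$-family $\mathcal{F}^{\operatorname{Ric}}=\{j_\tau^{\operatorname{Ric}}:\tau\in S_q^{d-1}\}$ of Jacobi equations from Section~\ref{sec3}, with coefficient $-\tfrac{1}{d-1}\operatorname{Ric}(\dot\gamma_\tau(t),\dot\gamma_\tau(t))$ (normalized so its constant-coefficient solutions are the $\operatorname{sn}_k$). By Step 2 its coefficient distribution is $\Delta$ at every time, so $\mathcal{F}^{\operatorname{Ric}}=\mathcal{F}_\sigma$ for the constant-coefficient family $\mathcal{F}_0$ with coefficients drawn from $\Delta$, the automorphism $\sigma(t)$ being the one induced by geodesic flow (measure-preserving by Step 2). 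The ``constant is best'' Corollary of Theorem 1, with $p=d-1$, gives $\operatorname{Tot}(t,d-1,\mathcal{F}^{\operatorname{Ric}})\leq\operatorname{Tot}(t,d-1,\mathcal{F}_0)=\int_\Delta\operatorname{sn}_k^{d-1}(t)$. Chaining with the Riccati/Cauchy--Schwarz estimates of Section~\ref{sec3}, which give $\det J_\tau(t)\leq(j_\tau(t))^{d-1}\leq(j_\tau^{\operatorname{Ric}}(t))^{d-1}$ up to the first focal point with the ad-hoc truncation beyond it, and integrating over $S_q^{d-1}$, yields $\textrm{area}(t)\leq\textrm{enhanced-BGarea}(t)$. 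Integrating over the radius, $\textrm{volume}(t)\leq\int_0^t\textrm{area}(\tau)\,d\tau\leq\int_0^t\textrm{enhanced-BGarea}(\tau)\,d\tau=\textrm{enhanced-BGvolume}(t)$, the first inequality being equality up to the focal distance and otherwise absorbing the re-covering of volume past conjugate points (exactly where the truncated $\operatorname{sn}_k$ vanishes as well); equivalently one quotes the volume halves of Theorems 2 and $2'$ with $C_k^d$ in place of $\operatorname{sn}_k^{d-1}$.

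\medskip\noindent The hard part is the $t$-independence in Step 2 for non-symmetric spaces of infinite volume: there the Liouville/Fubini argument degenerates, since one cannot divide by $\operatorname{Vol}(M)=\infty$, and a priori the distribution of $\operatorname{Ric}(\dot\gamma(t),\dot\gamma(t))$ over a single sphere could drift with $t$. I would handle this by (i) reducing to the finite-volume case whenever $M$ admits a cocompact group of isometries, and (ii) in general passing to the compact quotient $T/\operatorname{Isom}(M)\cong S_q^{d-1}/H_q$, onto which the geodesic flow descends, and checking that the descended flow preserves the induced finite measure---equivalently, that the worst-case distribution $\Delta_q=\sup_t\operatorname{cum}_{q,t}$ of Theorem $2'$ is already attained at $t=0$, so that $\Delta_q=\Delta$. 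That check is the only place where genuine work is needed; everything else is bookkeeping on top of Theorem 1 and the Bishop--Gromov-type estimates of Section~\ref{sec3}.
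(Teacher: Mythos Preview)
Your finite-volume reduction (Steps 1--3) is correct: homogeneity collapses the averages in Theorem~2 to a single base point, and your identification of the right-hand sides with $\textrm{enhanced-BGarea}(t)$ and $\textrm{enhanced-BGvolume}(t)$ is exactly right. You have also correctly isolated the genuine obstruction in the infinite-volume non-symmetric case---that Liouville preservation alone does not force $t$-independence of $\Delta_q(t)$ once you can no longer integrate over $M$ and divide by $\operatorname{Vol}(M)$.

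The paper, however, does \emph{not} split into cases or route through Theorems~2 and~$2'$. It gives a single argument, valid regardless of $\operatorname{Vol}(M)$, that closes precisely the gap you flagged. The extra ingredient is a time-reversal observation about the \emph{spatial} factor: the push-forward of Liouville measure from $T$ to $M$ transforms without distortion under geodesic flow, because if forward flow increased the spatial density at some point then backward flow would decrease it, yet the spatial push-forward cannot distinguish forward from backward (every geodesic can be parameterized in either direction). One then uses homogeneity to choose isometry-induced identifications of all cotangent spaces with $T_q^*M$, so that Liouville factors as $\operatorname{vol}_M \wedge \omega$; since the total (Liouville) and the spatial factor are both undistorted by the flow, the tangential factor $\omega$ must be as well. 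Hence the law of $\operatorname{Ric}(\dot\gamma_v(t),\dot\gamma_v(t))$ over the \emph{single} sphere $S_q^{d-1}$ is $t$-independent, and Theorem~1 (with $p=d-1$) applies directly to that sphere---no averaging over $M$ is ever needed.

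Your proposed alternative through the quotient $T/\operatorname{Isom}(M)\cong S_q^{d-1}/H_q$ is a legitimate route and would succeed once you verify the descended flow preserves the induced measure; that verification is essentially equivalent to the paper's tangential-factor conclusion. But the paper's time-reversal trick reaches the same endpoint without setting up the quotient, without cocompactness hypotheses (which, incidentally, are not generally available for infinite-volume homogeneous spaces), and without the finite/infinite case split.
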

\noindent The upperbounds in these inequalities are defined by 
\begin{eqnarray}
\textrm{enhanced-BGarea}(t)
&\equiv &  \int d \Omega  \,  \textrm{sn}  \left(   \frac{ \mathcal{R}_{ \mu \nu } X_\Omega^\mu X_\Omega^\nu }{d-1} , t \right)^{d-1}   \\
\textrm{enhanced-BGvolume}(t) & \equiv & \int_0^t   d\tau \, \textrm{enhanced-BGarea}(\tau) . 
  \label{eq:explicitimproveBG}
\end{eqnarray}
\begin{proof}[Proof of Theorem 5]  Even without the homogeneity assumption on $\mathcal{M}$, geodesic flow has the following two properties: (i) it transports the Liouville measure without distortion, (ii) the push-forward of the Liouville measure from the cotangent bundle $\mathcal{T}^*\mathcal{M}$ of $\mathcal{M}$  to  $\mathcal{M}$ also transforms without distortion. (Note that the cotangent bundle and the tangent bundle are canonically isometric for Riemannian metrics: we can pass from a covariant tensor to a contravariant tensor using the metric.) Point (ii) can be seen by time-reversal symmetry: if there was a point $q$ in $\mathcal{M}$ at which the density was increased by geodesic flow in positive time, it would be decreased by time-reversed geodesic flow. But in the push-forward there is no difference between forward- and backwards-geodesic flow since all geodesics can be parameterized in both directions. Now exploit the fact that $\mathcal{M}$ is homogeneous to choose arbitrary isometry-induced identifications of the cotangent spaces at all points. Since the Liouville measure factors as the wedge product of the spatial measure and the tangential measure, constancy in the spatial factor now implies constancy of the tangential factor. This means that for any time $t$ the geodesic flow emanating from a single point $q$ has Ricci quadratic form values obeying the same law as the Ricci quadratic form values in any tangent space. This allows shuffling of the probability measure of these values on the sphere of radius $t$ about $q$, exactly as in the compact case. But now the shuffling is only between directions normal to the sphere. 
\end{proof}

\section{Exploring the homogeneous enhanced-BG theorem}
Let's examine the enhanced-BG bound for homogeneous spaces, Eq.~\ref{eq:newtheorem}. 
\subsection{Enhanced-BG theorem: example} \label{sec:improvedBGexamples}
In this subsection we look at the enhanced-BG theorem at work for an explicit example metric. We will pick a simple example for which we have an independent technique for calculating the volume exactly.  Since the example will be homogeneous, we can compare to the version of the enhanced-BG bound given in Eq.~\ref{eq:newtheorem}.

\subsubsection{$\mathbb{H}^2 \times \mathbb{R}^2$}
The metric for the product of a hyperbolic space and a plane is 
\begin{equation}
ds^2 = d\tau_1^2 +  \sinh^2 {\tau_1}{} \, d \phi^2  + d \tau_2^2 +  \tau_2^2\, d \psi^2.  \label{eq:H2R2metric}
\end{equation}
In this metric, geodesics do not `turn', meaning  geodesics that point partly in the $\mathbb{H}^2$ and partly in the $\mathbb{R}^2$ will continue to point in these directions  in unchanging proportion, so that $\mathcal{R}_{\mu \nu} X^{\mu} X^{\nu}$ is conserved along geodesics. \\

\noindent The Ricci curvature is diagonal in these coordinates, and given by 
\begin{equation}
\mathcal{R}^\mu_{\ \nu} = \textrm{diag}[ -1 , -1, \, 0, \, 0 \,] \ . 
\end{equation}
The Ricci curvature in a direction that points partly down one axis and partly down another is 
\begin{equation}
X^\mu = \{ \cos \theta , 0 , \sin \theta, 0 \} \ \ \rightarrow \ \ \ \mathcal{R}_{\mu \nu} X^\mu X^\nu =  - {\cos^2 \theta} \ . 
\end{equation}
The scalar curvature is 
\begin{equation}
\mathcal{R} = - 2 \  . \label{eq:ricciscalarnosurprise}
\end{equation}
\subsubsection{Exact volume for $\mathbb{H}^2 \times \mathbb{R}^2$}

 The volume contained within a geodesic ball of radius $t$ can be evaluated exactly:
\begin{equation}
\textrm{volume}(t) =   \int_0^{t} d \tau  \, 2 \pi   \sinh {\tau}{} \cdot \pi (t^2 - \tau^2) =   2 \pi^2  \left( 2  t \sinh t  - 2 \cosh t - t^2  + 2 \right) . \label{eq:thisformula234}
\end{equation}
(Since $\mathbb{H}^2 \times \mathbb{R}^2$ is a symmetric space, we could also have calculated the volume of geodesic balls using the general formula in \cite{symmetricspaces}.) 
\subsubsection{Bishop-Gromov bound for $\mathbb{H}^2 \times \mathbb{R}^2$} 
The BG bound can be evaluated by using  $\mathcal{R}_{\mu \nu} X_\textrm{min}^\mu  X_\textrm{min}^\nu =$ min$[\mathcal{R}^{\mu}_{\ \mu}] = -1$ in Eq.~\ref{eq:definitionofBGoft}, 
\begin{equation}
\textrm{BG}(t) = {24 \pi^2 }  \left(2 + \cosh \frac{t}{\sqrt{3} } \right) \sinh^4 \frac{t}{2\sqrt{3} }     . \label{eq:BGboundappliedtothisexample}
\end{equation}
\subsubsection{Enhanced-Bishop-Gromov bound for $\mathbb{H}^2 \times \mathbb{R}^2$} 
The enhanced-BG bound can also be evaluated. Let's look very close to the origin to  perform the angular integral. Writing $\tau_1 = t \cos \theta$ and $\tau_2 = t \sin \theta$ and expanding Eq.~\ref{eq:H2R2metric} for small $t$ gives 
 \begin{equation}
ds^2 = dt^2 + t^2 \left( d \theta^2 + \cos^2 \theta \, d \phi^2  + \sin^2 \theta \,  d \psi^2 \right) + \textrm{O}(t^4) + \ldots.  
\end{equation}
Since $\mathcal{R}_t^{\ t}$ depends only on $\theta$ and not on $\phi$ or $\psi$, the $\phi$ and $\psi$ integrals are trivial. This leaves  
\begin{equation}
\textrm{enhanced-BG}(t)  = (2 \pi)^2 \int_0^{\frac{\pi}{2}}  d \theta \cos \theta \sin \theta  \int_0^t d \tau \left(\frac{\sqrt{3} }{\cos \theta} \sinh \frac{\tau \cos \theta }{\sqrt{3} }\right)^3 . 
\end{equation}

\subsubsection{Comparing enhanced-BG  to exact volume}
The small-radius expansion is
\begin{eqnarray}
\textrm{volume}(t) & = & \frac{1}{2} \pi^2 t^4 \left( 1 + \frac{t^2}{18  } + \frac{t^4}{720  } + \ldots \right)  \label{eq:smalldistanceexact} \\
\textrm{enhanced-BG}(t) & = & \frac{1}{2} \pi^2 t^4 \left( 1 + \frac{t^2}{18  } + \frac{13 t^4}{6480  } + \ldots \right)  \label{eq:smalldistanceimprovedBG} \\
\textrm{BG}(t) & = &  \frac{1}{2} \pi^2 t^4 \left( 1 + \frac{t^2}{9  } + \frac{13 t^4}{2160  } + \ldots \right)  \ . \label{eq:smalldistanceBG}
\end{eqnarray}
The large-radius expansion is
\begin{eqnarray}
\textrm{volume}(t) & = & 2 \pi^2 \, t  \, \exp [ t ]  + \ldots \label{eq:exactlatetimegrowthH2R2} \\
\textrm{enhanced-BG}(t) & = & \frac{3}{2}  \pi^2 \,  t^{-1} \exp [ \sqrt{3} t ] + \ldots  \\
\textrm{BG}(t) & = & \frac{3}{4} \pi^2    \exp [\sqrt{3} t ]  + \ldots . 
\end{eqnarray}
As required by our theorems, at all times 
\begin{equation}
\mathbb{H}^2 \times \mathbb{R}^2: \ \ \textrm{volume}(t)  \leq \textrm{enhanced-BG}(t) \leq \textrm{BG}(t) . \label{eq:improvedboundhierarchy}
\end{equation}

     \begin{figure}[htbp] 
    \centering
    \includegraphics[width=\textwidth]{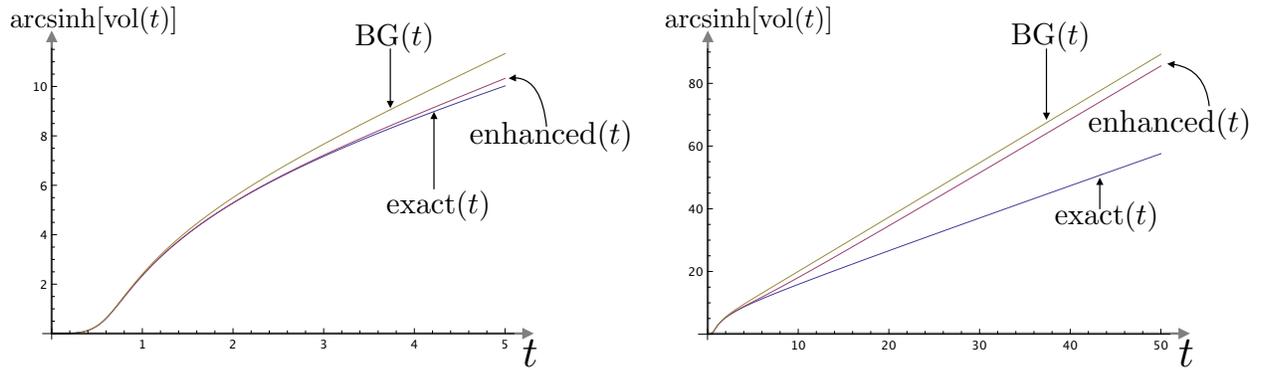} 
    \caption{The growth of volume (rescaled with an arcsinh for ease of comparison) for the example $\mathbb{H}^2 \times \mathbb{R}^2$. Left: at early times, the enhanced-Bishop-Gromov  bound is close to the exact answer. Right: at late times, the enhanced-Bishop-Gromov bound is only a little better than the unenhanced-BG bound. At all times volume$(t) \leq$ enhanced-BG$(t) \leq$ BG($t$).}
        \label{fig:improvedBGexample1}
 \end{figure}

\FloatBarrier

\noindent At short times, the enhanced-BG bound is \emph{much} enhanced over the BG bound---the O($t^2$) correction term is now exact. (In Sec.~\ref{eq:BBatshortdistances} we will show that the enhanced-BG bound always gets the O($t^2$) correction right, not just for this example.)  At late times, the enhanced-BG bound is \emph{barely} enhanced over the BG bound---the exponent isn't improved at all, and we just get an overall multiplicative improvement of $2/t$.

\subsection{Enhanced-BG at small radius} \label{eq:BBatshortdistances}
The enhanced-BG bound improves on the BG bound by being tighter (except for Einstein metrics, for which the two bounds are the same). Let's  look at  how much of an improvement we can expect for very small geodesic balls. 

Equations~\ref{eq:smalldistanceexact}-\ref{eq:smalldistanceBG} looked at the volume of a tiny geodesic ball for a particular example geometry. For that geometry, and Taylor-expanding in the radius, the enhanced-BG bound gave the {correct} O($t^2$) correction to the Euclidean expression (the enhanced-BG bound is tight at that order), whereas the BG bound did not (the BG bound is loose). In this subsection, we'll prove that the enhanced-BG bound is \emph{always} tight at O($t^2$) not just for that particular example but for all smooth metrics. We'll also explore the next-order correction. \\

\noindent Before we begin, let's do some preparatory Riemannology. Recall that $\mathcal{R}^2$, $\mathcal{R}_{\mu \nu} \mathcal{R}^{\mu \nu}$, \& $\mathcal{R}_{\mu \nu \rho \sigma} \mathcal{R}^{\mu \nu \rho \sigma}$ are all nonnegative quantities. However, those three do not exhaust the nonnegative quadratic quantities that we can construct out of the Riemann tensor. For example, we can break the Riemann tensor into irreducible representations of the orthogonal group by dividing it up into traceful $S_{\mu \nu \rho \sigma} $, semitraceless (Einstein) $E_{\mu \nu \rho \sigma}$, and completely traceless (Weyl) $C_{\mu \nu \rho \sigma}$ parts, as described in \cite{Decom} 
\begin{equation}
\mathcal{R}_{\mu \nu \rho \sigma} = S_{\mu \nu \rho \sigma} + E_{\mu \nu \rho \sigma} + C_{\mu \nu \rho \sigma} \ , 
\end{equation}
and then each of the individual pieces, when squared, give nonnegative scalars, 
\begin{eqnarray}
S_{\mu \nu \rho \sigma} S^{\mu \nu \rho \sigma} &=& \frac{2 \mathcal{R}^2 } {d(d-1)} \\
E_{\mu \nu \rho \sigma} E^{\mu \nu \rho \sigma} &=& \frac{4 \mathcal{R}_{\mu \nu} \mathcal{R}^{\mu \nu}} {d-2} - \frac{4 \mathcal{R}^2} {d(d-2)}   \label{eq:Esquaredinvaraint} \\ 
C_{\mu \nu \rho \sigma} C^{\mu \nu \rho \sigma} &=& \mathcal{R}_{\mu \nu \rho \sigma} \mathcal{R}^{\mu \nu \rho \sigma} - \frac{4 \mathcal{R}_{\mu \nu} \mathcal{R}^{\mu \nu}} {d-2} + \frac{2 \mathcal{R}^2} {(d-1)(d-2)}   \ . 
\end{eqnarray}

\subsubsection{Exact volume growth to O($t^4$)} 
The volume of a small geodesic ball of radius $t$ is \cite{Gray} 
\begin{equation}
\textrm{volume}(t) = \frac{\Omega_{d-1}}{d} t^d \left( 1 - \frac{ \mathcal{R}}{6(d+2)} t^2 + \frac{5 \mathcal{R}^2  +8\mathcal{R}_{\mu \nu}\mathcal{R}^{\mu \nu} - 3 \mathcal{R}_{\mu \nu \rho \sigma} \mathcal{R}^{\mu \nu \rho \sigma} - 18  \Box \mathcal{R}  }{360(d+2)(d+4)} t^4 + \ldots  \right)  \label{eq:exactvolumetofourthorderint}
\end{equation}
 For a homogeneous metric,  $\Box \mathcal{R} \equiv \nabla_\mu \nabla^\mu \mathcal{R} = 0$.

\subsubsection{Bishop-Gromov bound  to O($t^4$)} 
The Bishop-Gromov bound on the volume gives 
\begin{equation}
\textrm{BG}(t)  =  \frac{\Omega_{d-1}}{d} t^{d} \left( 1 - \frac{ d \, \textrm{min}[\mathcal{R^{\mu}_{\ \mu}} ] }{6(d+2)}   t^2 + \frac{ d (5d - 7) \, \textrm{min}[\mathcal{R^{\mu}_{\ \mu}} ]^2}{360(d-1)(d+4)}  t^4 + \ldots \right) \ .  \label{eq:BGgeneralouttofourthorder}
\end{equation}
The Bishop-Gromov bound gets the O($t^2$) term right if and only if 
\begin{equation}
\textrm{O}(t^2) \textrm{ BG is tight iff}: \ \ \ \mathcal{R} = d  \times  \textrm{min}[\mathcal{R^{\mu}_{\ \mu}}] \ , 
\end{equation}
which is to say iff every component of $\mathcal{R^{\mu}_{\ \mu}}$ is the same, so that the space is an `Einstein metric' with $\mathcal{R}_{\mu \nu} = \textrm{constant} \times g_{\mu \nu}$. This is equivalent to saying that the $E^2$ invariant from Eq.~\ref{eq:Esquaredinvaraint} is zero. 

For homogeneous spaces, subtracting Eq.~\ref{eq:exactvolumetofourthorderint} from Eq.~\ref{eq:BGgeneralouttofourthorder} tells us that the Bishop-Gromov bound gets both the O($t^2$) and O($t^4$) terms right if and only if 
\begin{equation}
E_{\mu \nu \rho \sigma} E^{\mu \nu \rho \sigma}  = C_{\mu \nu \rho \sigma} C^{\mu \nu \rho \sigma}  = 0 \ . \label{eq:EzeroWzero}
\end{equation}
These two conditions hold if and only if we have a maximally symmetric space.

 In summary, except for Einstein spaces,  the BG bound is loose at O($t^2$), and except for maximally symmetric spaces, the BG bound is loose at O($t^4$). 

\subsubsection{Enhanced-Bishop-Gromov bound  to O($t^4$)} 

In what follows we will need that\footnote{The indices follow by symmetry. The  prefactor  in Eq.~\ref{eq:Omegadeltaintegral1} follows from 
\begin{equation}
\Omega_{d-1} = \int d \Omega_{d-1} = \int d \Omega_{d-1}  \sum_{\mu=1}^d X_\Omega^{\mu} X_\Omega ^{\mu}  = d \times  \int d \Omega_{d-1} X^{\mu}_\Omega X_\Omega^{\mu} \Bigl|_\textrm{not summed} \  . 
\end{equation}
The  prefactor in Eq.~\ref{eq:Omegadeltaintegral2} follows from 
\begin{equation}
\Omega_{d-1} = \int d \Omega_{d-1} = \int d \Omega_{d-1} \left( \sum_{\mu=1}^d X_\Omega^{\mu} X_\Omega ^{\mu} \right)^2 =  \int d \Omega_{d-1} \left( d  \left( X^{\mu}_\Omega \right)^4 +  {d(d-1)}  \left(X_\Omega^{\mu}\right)^2 (X_\Omega^{\nu \neq \mu} )^2 \right) \Bigl|_\textrm{not summed} .
\end{equation}}, for unit vectors $X^{\mu}_{\Omega}$ (here `unit' means that $\sum_{\mu}^d  (X^{\mu}_{\Omega})^2 = 1$), 
\begin{eqnarray}
\int d \Omega_{d-1} \, X_\Omega^\mu X_\Omega^\nu& =&  \frac{1}{d} \, \Omega_{d-1} \,  \delta^{\mu \nu} \label{eq:Omegadeltaintegral1} \\
\int d \Omega_{d-1}  \, X_\Omega^\mu X_\Omega^\nu X_\Omega^\rho X_\Omega^\sigma & = &  \frac{1 }{d(d+2)} \Omega_{d-1}  \, \left( \delta^{\mu \nu} \delta^{\rho \sigma}  + \delta^{\mu \rho} \delta^{\nu \sigma}  + \delta^{\mu \sigma} \delta^{\nu \rho }  \right) \ . \label{eq:Omegadeltaintegral2}
\end{eqnarray}
Armed with these, let's expand out Eq.~\ref{eq:explicitimproveBG} at short distances,
\begin{eqnarray}
\textrm{enhanced-BG}(t)
 & = &  \int d \Omega_{d-1} \, \int_0^t d\tau  \  \textrm{sn} \left(   \frac{{ \mathcal{R}_{ \mu \nu } X_\Omega^\mu X_\Omega^\nu } }{{d-1}} , \tau  \right)^{d-1} \\
 & = &   \frac{t^{d}}{d} \int d \Omega_{d-1} \left( 1 - \frac{d \  \mathcal{R}_{ \mu \nu }  X_\Omega^\mu X_\Omega^\nu  }{6(d+2)}   t^2 + \frac{  d(5d - 7)(\mathcal{R}_{ \mu \nu } \mathcal{R}_{ \rho \sigma } X_\Omega^\mu X_\Omega^\nu X_\Omega^\rho X_\Omega^\sigma) }{360(d-1)(d+4)}  t^4 \ldots \right) \nonumber \\
 & = & \frac{\Omega_{d-1} t^d}{d} \left(1 - \frac{ \mathcal{R} }{6(d+2)}   t^2 +    \frac{  (5d - 7)(\mathcal{R}^2 + 2 \mathcal{R}_{ \mu \nu } \mathcal{R}^{ \mu \nu } ) }{360(d-1)(d+2)(d+4) }  t^4 \ldots \right)  \ \label{eq:improvedBGgeneralouttofourthorder} . 
\end{eqnarray}
Comparing to Eq.~\ref{eq:exactvolumetofourthorderint}, we see that the enhanced-Bishop-Gromov bound always gets the O($t^2$) correction right. \\

\noindent What about the O($t^4$) correction? For a homogeneous space, subtracting Eq.~\ref{eq:exactvolumetofourthorderint} from Eq.~\ref{eq:improvedBGgeneralouttofourthorder} gives 
\begin{equation}
\textrm{enhanced-BG}(t) - \textrm{vol}(t) = \frac{\Omega_{d-1} t^d}{d} \left(  \frac{d(1+d) E_{\mu \nu \rho \sigma} E^{\mu \nu \rho \sigma}  + 6(d-1) C_{\mu \nu \rho \sigma} C^{\mu \nu \rho \sigma} }{720(d-1)(2+d)(4+d)} t^4 + \textrm{O}(t^6) \right) \ . 
\end{equation}
This is nonnegative, as required by the enhanced-Bishop-Gromov theorem. The difference is zero if and only if 
\begin{equation}
E_{\mu \nu \rho \sigma} E^{\mu \nu \rho \sigma}  = C_{\mu \nu \rho \sigma} C^{\mu \nu \rho \sigma}  = 0 \ , \label{eq:nodifferenceonlyformaximallysymmetric}
\end{equation}
exactly as in Eq.~\ref{eq:EzeroWzero}. Thus the enhanced-Bishop-Gromov bound gets both the O($t^2$) and O($t^4$) terms right if and only if we have a maximally symmetric space. Otherwise the enhanced-BG bound is loose at O($t^4$). (That we got the same condition for the enhanced-BG bound as we did for the BG bound with an Einstein metric is no surprise---for an Einstein metric the enhanced-  and unenhanced-BG bounds are the same.)

\subsection{Enhanced-BG at large radius} \label{eq:BBatlongdistances}

 In this subsection, we'll see how much of an improvement the enhanced-BG bound can be over the BG bound for large geodesic balls.

When $\mathcal{R}^{\mu}_{\ \mu}$ is isotropic (i.e.~when we have an Einstein metric, so that all the eigenvalues of the Ricci curvature are degenerate), the enhanced-BG bound and the BG bound are the {same}.  If we want to look for a place where the two bounds are most \emph{different}, we should look for strongly anisotropic $\mathcal{R}^{\mu}_{\ \mu}$.  As an extreme example, let's consider 
\begin{equation}
\mathcal{R}^{\mu}_{\ \nu} = - (d-1) \delta^{\mu 1} \delta_{\nu 1} \ . \label{eq:onetrackRicci}
\end{equation}
All eigenvalues of the Ricci tensor are zero, except one eigenvalue is $\mathcal{R}^1_{\ 1} = -(d-1)$. (This is the value that $\mathcal{R}^1_{\ 1}$ would have in a unit hyperbolic space of dimension $d$.)  We're not going to worry about whether there actually is, as a matter of geometry, a metric that gives rise to this curvature tensor---our purpose is to investigate, as a matter of algebra, the maximum improvement that could conceivably be wrung from the enhanced-BG bound.

First let's evaluate the large-radius BG bound for this example. For large enough radius, we can replace \emph{sinh}'s with $\frac{1}{2}$\emph{exp}'s. Putting $\mathcal{R}_{\mu \nu}X^{\mu}_\textrm{min} X^{\nu}_\textrm{min} = -( d-1)$ into Eq.~\ref{eq:definitionofBGoft} gives
\begin{equation}
\textrm{BG}(t) =  \frac{\Omega_{d-1}}{d-1 }\left( \frac{1}{2} \exp [    t ] \right)^{d-1}  \left( 1   + \ldots \right) , 
\end{equation}
where the ``\ldots'' goes to zero at large $t$. 

Now let's evaluate the large-radius  enhanced-BG bound for this example. Define $\phi$ to be the angle between the bearing $\Omega$ and the negative-Ricci principal axis $\mu = 1$; the value of the Ricci curvature on this bearing is $\mathcal{R}_{\mu \nu} X^\mu_\phi X^\nu_\phi = -(d-1)\cos^2 \phi$.  At large radius Eq.~\ref{eq:explicitimproveBGversion2} gives  
\begin{eqnarray}
\textrm{enhanced-BG}(t) &=& \Omega_{d-2} \int_0^t d \tau \ 2 \int_0^{\frac{\pi}{2}} d\phi  \sin^{d-2} \phi  \left( \frac{\exp[ \cos \phi \ \tau] }{ 2 \cos \phi} \right)^{d-1} \left( 1 + \ldots \right) \\
& = &\textrm{BG}(t)  \frac{2 \, \Omega_{d-2}}{\Omega_{d-1}}  \int_0^{ \frac{\pi}{2} }  d\phi \,  \phi^{d-2}   \exp[  - \frac{1}{2} (d-1)  \phi^2  \, t] \left( 1 + \ldots \right)    , \label{eq:089obse}
\end{eqnarray} 
where in going from the first line to the second we have done an expansion in small $\phi$ (which is the dominant regime at large $t$). Again, the ``\ldots'' goes to zero at large $t$. 

We can straightforwardly evaluate this integral\footnote{Since we are prepared to tolerate multiplicative corrections ``\ldots'' that go to zero at large $t$, we can raise the upper limit of integration in Eq.~\ref{eq:089obse} from $\pi/2$ to $\infty$, and then perform that integration exactly
\begin{equation}
\frac{\textrm{enhanced-BG}(t)}{\textrm{BG}(t)} =  \frac{2}{\Omega_{d-1}} \left( \frac{2 \pi}{d-1} \frac{1}{t}  \right)^\frac{d-1}{2}  \left( 1 + \ldots \right) \ .  
\end{equation} }, but to qualitatively understand what is going on, consider which values of $\phi$ makes the largest contribution to the integrand. On the one hand, the geodesic with the greatest possible expansion is $\phi=0$, since that is the most negative direction for the Ricci curvature: the $\exp [\cos \phi \  t]$ term is largest for $\phi = 0$. On the other hand, only a tiny fraction of the directions leaving the origin have tiny $\phi$: the $\sin^{d-2} \phi$ term favors large values of $\phi$. The optimal tradeoff---and the largest contribution to the integrand---is at 
\begin{equation}
\partial_\phi \left( \phi^{d-2} \exp[  - \frac{1}{2} (d-1) \, \phi^2  t] \right)  = 0 \rightarrow \frac{d-2}{\phi_\textrm{max}} = (d-1) \phi_\textrm{max} t  \rightarrow \phi_\textrm{max}^2 = \frac{d-2}{d-1} \frac{1}{t} .
\end{equation}
As $t$ increases, the biggest contribution comes from a narrower and narrower beam closer and closer to the most-negative-Ricci axis.  For a bearing that lies within the beam, the contribution to the enhanced-BG bound is essentially undiminished from the contribution to the BG bound; on the other hand, bearings outside the beam barely contribute to the enhanced-BG bound at all. The enhanced-BG bound is thus smaller than the BG bound by the same ratio by which the beam is smaller than the full celestial sphere, namely roughly 
\begin{equation}
\frac{\textrm{enhanced-BG}(t)}{\textrm{BG}(t)} \sim \phi_\textrm{max}^{d-1} \sim \left( \frac{1}{t} \right) ^{\frac{d-1}{2}}.
\end{equation} 
At large $t$, the enhanced-BG bound is tighter than the BG bound by a large multiplicative factor. However, this multiplicative factor is only polynomially large in $t$, and so cannot compete with the overall exponential growth, 
\begin{eqnarray}
\textrm{BG}(t) &=& \exp[ (d-1) t + \textrm{O}(t^0) ]  \\
\textrm{enhanced-BG}(t) &=& \exp[ (d-1) t - \frac{1}{2}(d-1) \log t+  \textrm{O}(t^0) ]  \ . 
\end{eqnarray}
 As we saw in the example in Sec.~\ref{sec:improvedBGexamples}, the BG bound often fails to be tight by a multiplicative factor in the \emph{exponent}, and the enhanced-BG bound will fail to be tight by the same multiplicative factor. (Had we made the $\mu \neq 1$ Ricci curvatures positive, rather than zero as in Eq.~\ref{eq:onetrackRicci}, then the tightening from the enhanced-BG bound would be even stronger, but still not strong enough to correct the exponent.) Comparing to the results of Sec.~\ref{eq:BBatshortdistances}, we see the improvement rendered by moving from the BG to the enhanced-BG bound is more impressive for small balls than for large balls.

\subsection{Monotonicity results}
For homogeneous spaces, we have shown that 
\begin{equation}
\textrm{BG}(t) \geq \textrm{enhanced-BG}(t) \geq \textrm{volume}(t) \ . 
\end{equation}
Let's prove a slightly stronger result. We will show that for both of these inequalities, the difference between the two sides is monotonically non-decreasing. 

\subsubsection{Additive monotonicity for BG$(t)$ \emph{vs.}~eBG$(t)$} 
Let's prove that the additive factor by which the enhanced-BG bound (enhanced-BG$(t) \equiv $ eBG$(t)$) enhances the BG bound  (BG$(t)$)  is a monotonically non-decreasing function of $t$,
\begin{equation}
\textrm{additive monotonicity: } \ \ \ \ \frac{d}{dt} \left( \textrm{BG}(t) - \textrm{eBG}(t) \right)  \ \geq \ 0 \ . \label{eq:additivemontonicity}
\end{equation}
We can derive this by subtracting the enhanced-BG bound, Eq.~\ref{eq:explicitimproveBGversion2} from the BG bound, Eq.~\ref{eq:definitionofBGoft}, and differentiating with respect to time to give  
\begin{equation}
\frac{d}{dt} \left( \textrm{BG}(t) - \textrm{eBG}(t) \right) = \int d \Omega_{d-1} \left(  \operatorname{sn}\Bigl(  \frac{{ \mathcal{R}_{\mu \nu} X_\textrm{min}^{\mu} X_\textrm{min}^{\nu}}}{{d-1}}, \tau \Bigl)^{d-1} -  \operatorname{sn}\Bigl(  \frac{{ \mathcal{R}_{\mu \nu} X_{\Omega}^{\mu} X_{\Omega}^{\nu}}}{{d-1}}, \tau \Bigl)^{d-1}   \right) . \end{equation}
Since $\textrm{sn}(k,t)$ is a monotonically non-increasing function of $k$,  the integrand on the right-hand side is everywhere non-negative, establishing Eq.~\ref{eq:additivemontonicity}.
\subsubsection{Multiplicative monotonicity for BG$(t)$ \emph{vs.}~eBG$(t)$}  \label{subsec:multiplicativemonotonicityeBG}
Now let's prove that the multiplicative factor by which the enhanced-BG bound enhances the BG bound is also a monotonically non-decreasing function of $t$,
\begin{equation}
\textrm{multiplicative monotonicity: } \ \ \ \  \frac{d}{dt} \left(  \frac{\textrm{BG($t$)}}{\textrm{eBG($t$)}} \right) \ \geq \ 0 \ . \label{eq:multiplicativemonotonicity}
\end{equation}
If $\mathcal{R}_{\mu \nu}X^\mu_{\textrm{min}} X^\nu_{\textrm{min}} > 0$, then at some point  BG$(t)$ will stop growing, but Eq.~\ref{eq:additivemontonicity} guarantees that enhanced-BG($t$) will have stopped growing earlier, so in that era Eq.~\ref{eq:multiplicativemonotonicity} is trivially true (and saturated). Thus it only remains to prove Eq.~\ref{eq:multiplicativemonotonicity} in the era when BG$'(t) \equiv \textrm{BGarea}(t)>0$. It will be helpful to write the enhanced-BG bound, Eq.~\ref{eq:explicitimproveBGversion2},  in terms of the area element $\textrm{eBGarea}_\Omega(t)$  on the bearing $\Omega$, 
\begin{equation}
\textrm{eBG}(t) =  \int \hspace{-1mm} d t  \, \textrm{eBGarea}(t)  \equiv \int \hspace{-1mm} d t \hspace{-1mm} \int \hspace{-1mm} d \Omega \, \textrm{eBGarea}_\Omega(t)  \equiv \int \hspace{-1mm} dt \hspace{-1mm} \int \hspace{-1mm} d \Omega \, \textrm{sn}  \Bigl(  \frac{ {\mathcal{R}_{\mu \nu} X_\Omega^{\mu} X_\Omega^{\nu}}}{{d-1}}, t \Bigl)  ^{d-1} \ . 
\end{equation}
We can do the same rewriting for the BG bound, though in this case, and crucially for our proof, the area element $\textrm{BGarea}_\Omega(t)$ is the same for all bearings, 
\begin{equation}
\textrm{BG}(t) =  \int \hspace{-1mm} d t  \, \textrm{BGarea}(t)  \equiv \int \hspace{-1mm} d t \hspace{-1mm} \int \hspace{-1mm} d \Omega \, \textrm{BGarea}_\Omega(t)  \equiv \int \hspace{-1mm} d t \hspace{-1mm} \int \hspace{-1mm} d \Omega \, \textrm{sn}  \Bigl(  \frac{ {\mathcal{R}_{\mu \nu} X_\textrm{min}^{\mu} X_\textrm{min}^{\nu}}}{{d-1}}, t \Bigl)  ^{d-1}  \ . 
\end{equation}
To prove Eq.~\ref{eq:multiplicativemonotonicity}, first observe that $\textrm{BGarea}_\Omega(t)\geq \textrm{eBGarea}_\Omega(t) \geq 0$ and that the logarithmic derivative of the BG area element is larger than the logarithmic derivative of the eBG area element, 
\begin{equation} 
 \frac{\partial_t \textrm{BGarea}_\Omega(t)}{\textrm{BGarea}_\Omega(t)} \textrm{eBGarea}_\Omega(t) - \partial_t \textrm{eBGarea}_\Omega(t) \ \geq \ 0 \ .
\end{equation} 
This follows by direct calculation in terms of sn$(k,t)$, but we also recognize it as a special case (constant $\kappa$) of the monotonicity lemmas for Jacobi solutions derived in Sec.~\ref{sec:monotonicitylemmanegativekappa}. Next integrate with respect to $\Omega$ to pass from the area element to the full area 
\begin{equation} 
 \frac{\partial_t \textrm{BGarea}(t)}{\textrm{BGarea}(t)} \textrm{eBGarea}(t) - \partial_t \textrm{eBGarea}(t) \ \geq \ 0 \ . \label{eq:montoneareaenhancedment}
\end{equation} 
(Note that this only works because BGarea$_\Omega(t)$ is independent of $\Omega$.) This is a monotonicity result on the logarithmic derivative of the area. To turn this into a monotonicity result on the logarithmic derivative of the volume, consider the quantity 
\begin{equation}
\Lambda =  \frac{\partial_t \textrm{BG}(t)}{\textrm{BG}(t)} \textrm{eBG}(t) - \partial_t \textrm{eBG}(t) \ . 
\end{equation}
Let's show that $\Lambda$ never goes negative. On the one hand, $\Lambda$ starts off non-negative. And on the other hand as a matter of algebra 
\begin{equation}
\frac{d \Lambda}{dt} =   \Lambda \left( \frac{\partial_t  \textrm{BGarea}(t)}{   \textrm{BGarea}(t)}   - \frac{ \partial_t  \textrm{BG}(t)}{\textrm{BG}(t)} \right)  + \left(  \frac{\partial_t  \textrm{BGarea}(t)}{   \textrm{BGarea}(t)}  \textrm{eBGarea}(t)   - {\partial_t  \textrm{eBGarea}(t)} \right)  . 
\end{equation}
Using Eq.~\ref{eq:montoneareaenhancedment}, this tells us that the derivative of $\Lambda$ when $\Lambda = 0$ is always non-negative, so $\Lambda$ cannot  cross zero and go negative. This establishes Eq.~\ref{eq:multiplicativemonotonicity}. 

\subsubsection{Additive monotonicity for eBG$(t)$ \emph{vs.}~volume$(t)$} 
It follows from the enhanced-BG bound that enhanced-BG$(t) \geq \textrm{volume}(t)$. Let's show that, further, the additive gap never decreases, 
\begin{equation}
\textrm{additive monotonicity: } \ \ \ \ \frac{d}{dt} \left( \textrm{eBG}(t) - \textrm{volume}(t) \right)  \ \geq \ 0 \ . \label{eq:additivemontonicityvolume} \ 
\end{equation}
This follows directly from Theorem~\ref{theoremforhomogeneous}, which established that 
\begin{equation}
\textrm{eBGarea}(t) \geq \textrm{area}(t)  \ . 
\end{equation}
\subsubsection{Multiplicative monotonicity for eBG$(t)$ \emph{vs.}~volume$(t)$?} 
It was established by Gromov that 
\begin{equation}
\frac{d}{dt} \left( \frac{\textrm{BG}(t)}{\textrm{volume}(t)} \right) \ \geq  \ 0 \ . 
\end{equation}
Let us note that we have \emph{not} established an analogous result for the enhanced-BG bound, 
\begin{equation}
\textrm{not proved:} \ \ \frac{d}{dt} \left( \frac{\textrm{eBG}(t)}{\textrm{volume}(t)} \right) \ \geq \ 0 \ . 
\end{equation}
We cannot prove this using the technique of Sec.~\ref{subsec:multiplicativemonotonicityeBG} because {both} eBGarea$_\Omega(t)$ {and} area$_\Omega(t)$ are functions of $\Omega$, unlike BGarea$_\Omega(t)$.

\section{Conclusion} 
The Bishop-Gromov bound for homogeneous spaces has to make a worst-case assumption: it treats every geodesic  as though it points in the most expansive direction $X^\mu_\textrm{min}$. The enhanced-Bishop-Gromov bound for homogeneous spaces replaces this worst-case assumption with an \emph{average} over directions, but you have to be careful how you take the average. If you do an unweighted average of the Ricci curvature over all directions leaving the starting point, this just gives the Ricci scalar 
\begin{equation}
 \int d \Omega \, \mathcal{R}_{\mu \nu} X^\mu_\Omega X^\nu_\Omega = \Omega (d-1) \mathcal{R},
\end{equation}
and as we discussed (and will see again in more detail in the appendix), no interesting upperbound can be placed on the volume using just the Ricci scalar, even if we restrict to spaces of negative curvature. Since volume grows exponentially fast in negatively curved spaces, you might summarize this with the slogan that you can't `average  \emph{then} exponentiate'. Instead, the enhanced Bishop-Gromov bound for homogeneous spaces, Eq.~\ref{eq:explicitimproveBGversion2}, takes a time-dependent \emph{weighted} average of the Ricci curvature: the enhanced-BG bound exponentiates  \emph{then} averages. 

For inhomogeneous spaces, we were unable to place a tighter upperbound on the quantity bounded by the original BG-bound: we were unable to improve the upperbound on the rate of growth of geodesic balls around the \emph{worst-case} starting point $q$. However, for finite-volume inhomogeneous spaces we were able to place a novel upperbound on a different quantity: the \emph{average} rate of growth of geodesic balls, averaged over all starting points. Once again, to derive the bound we needed to exponentiate {then} average. \\

\noindent Though the enhanced-BG bound is tighter than the BG bound, it is still typically not tight. Let's discuss the three  effects that may prevent the enhanced-BG bound being tight for homogeneous spaces. 
\begin{enumerate}
\item Cut loci. 

The volume of a geodesic ball counts each point at most once: the point counts if it has been visited by a geodesic, and doesn't count if it hasn't been visited. No point counts more than once, even if it has been visited by more than one geodesic. Thus, once you hit a cut locus---as you might for example in a compact hyperbolic space---the BG and enhanced-BG bounds may start double-counting the volume, and therefore no longer be tight.

\item Turning. 

`Turning' is what we call it when $\mathcal{R}_{\mu \nu}X^\mu X^\nu$ is not conserved along a geodesic: the geodesic `turns' towards a direction of different Ricci curvature. This leads to the direction of fastest acceleration being shared amongst multiple geodesics, which as we saw in Sec.~\ref{sec:correlatedJacobis} is inefficient: the net effect of turning is to \emph{reduce}  the total rate of growth. Non-conservation of $\kappa(t) \sim - \mathcal{R}_{\mu \nu}X^\mu X^\nu$ leads to the enhanced-BG bound being loose.

\item Shear. 

In the Raychaudhuri equation, Eq.~\ref{eq:Raychaudhuri}, we saw that shear $\sigma^2$ will slow the growth of volume, making the enhanced-BG bound loose.

Shear is when the directions orthogonal to the geodesic are expanding at different rates, and so is sourced by having unequal sectional curvatures. An example of a space that gives rise to shear is $\mathbb{H}^2 \times \mathbb{R}^2$,  considered in Sec.~\ref{sec:improvedBGexamples}.  The space $\mathbb{H}^2 \times \mathbb{R}^2$ has unequal sectional curvatures, since the section that spans the $\mathbb{H}^2$ has negative sectional curvature, whereas the sections that have at least one leg down the $\mathbb{R}^2$ have zero sectional curvature. Since this example has neither cut loci nor turning, for this example the entire reason  the enhanced-BG bound fails to be tight is shear. 

We can intuitively understand why shear slows the rate of growth. 
 In the shuffling lemma, Eq.~\ref{eq:shufflinglemma}, we saw that the \emph{sum} of a collection of Jacobi solutions was maximized by shuffling their coefficients so as to \emph{increase} the inequality between them. However, looking ahead \cite{toappear}, it is easy to prove a complementary theorem.  In order to maximize the \emph{product} of the solutions, we should shuffle the coefficients so as a \emph{minimize} the inequality between them; if we shuffle fast enough then every trajectory effectively follows the average schedule, and the relevant theorem is 
\begin{equation}
\forall_i, j_i \geq 0 \ \rightarrow \ \prod_i j_\textrm{av.}(t) \ \geq \  \prod_i j_i(t) \ \ \ \textrm{where } \ j''_\textrm{av.}(t) \equiv  \kappa_{\textrm{av.}}(t) j_\textrm{av.}(t) \ , \label{eq:productheorem}
\end{equation}
where $\kappa_\textrm{av.}$ is the average of the schedules $\sum_i \kappa_\textrm{av.}(t) \equiv \sum_i \kappa_i(t)$.   The area element in the direction of a geodesic is determined by the determinant of the Jacobian, which is simply the product of the eigenvalues. Eq.~\ref{eq:productheorem} then implies that, at fixed $\mathcal{R}_{\mu \nu}X^\mu X^\nu$, the growth rate is  maximized when all the sections in which that geodesic participates are expanding at the same rate. 
This is equivalent to putting the shear to zero.  

We will have much more to say about this in a forthcoming paper \cite{toappear}, where we will show how to use higher curvature invariants to lowerbound the shear and further enhance the BG bound. 

\end{enumerate}

\section*{Acknowledgements}
MHF thanks the Aspen Center for Physics for hospitality.

\begin{bibdiv}
\begin{biblist}

	\bib{ballman}{misc}{
		author = {Ballman, Werner},
		title = {Riccati equation and volume estimates},
		year = {2016},
		note = {Preprint on webpage at \url{http://people.mpim-bonn.mpg.de/hwbllmnn/archiv/Volume160309.pdf}}
	}

	\bib{bishop}{book}{
		author = {Bishop, Richard},
		author = {Crittenden, Richard},
		title = {Geometry of Manifolds},
		year = {1964},
		series = {AMS Chelsea Publishing},
		volume = {344},
		publisher = {American Mathematical Society}
	}

\bib{BishopGromov}{article}{
		author = {Bishop, R.}
				title = {A relation between volume, mean curvature, and diameter} 
				journal = {Notices of the American Mathematical Society} 
				volume = {10}
				pages = {364}
				year={1963}}
\bib{Brown:2021euk}{article}{
		author = {Brown, Adam R.},
				title = {A Quantum Complexity Lowerbound from Differential Geometry}, 
								eprint={arXiv:2112.05724}, 
				year={2021}}

	\bib{bfls}{article}{
		author = {Brown, Adam R.},
		author = {Freedman, Michael H.},
		author = {Lin, Henry},
		author = {Susskind, Leonard}
    title = {Effective Geometry, Complexity, and Universality},
    eprint = {2111.12700},
    archivePrefix = {arXiv},
    primaryClass = {hep-th},
    year = {2021}
}

\bib{toappear}{article}{
		author = {Brown, Adam R.}
				author = {Freedman, Michael H.}
				title = {to appear} 
				year={2022}}

\bib{Brown:2022phc}{article}{
		author = {Brown, Adam R.},
				title = {Polynomial Equivalence of Complexity Geometries},
				eprint={arXiv:2205.04485}, 
				year={2022}}

	\bib{bs}{article}{
    author = {Brown, Adam R.}, 
    author = {Susskind, Leonard},
    title = {Second law of quantum complexity},
    eprint = {1701.01107},
    archivePrefix = {arXiv},
    primaryClass = {hep-th},
    journal = {Phys. Rev. D},
    volume = {97},
    number = {8},
    pages = {086015},
    year = {2018}
}

		\bib{Decom}{misc}{
		note = { \url{https://en.wikipedia.org/wiki/Ricci_decomposition}}
	}

	\bib{Gray}{article}{
		author = {Gray, A.},
		title = {The Volume of a Small Geodesics Ball of a Riemannian Manifold},
		volume = {20},
		journal = {Michigan Maths J.},
		year = {1973},
		pages = {338}, 
		note = {\url{https://projecteuclid.org/download/pdf_1/euclid.mmj/1029001150}} 
	}


	\bib{milnor}{book}{
		author = {Milnor, John},
		title = {Morse Theory},
		volume = {51},
		series = {Annals of Mathematics Studies},
		year = {1963},
		publisher = {Princeton University Press}
	}

		\bib{symmetricspaces}{article}{
		author = {Naveira, A. M.},
				author = {Gual, X.},
		title = {The volume of geodesic balls and tubes about totally geodesic submanifolds in compact symmetric spaces},
		volume = {7},
		journal = {Differential Geometry and its Applications},
		year = {1997},
		pages = {101-113}, 
	}

	\bib{Nielsen1}{article}{
		author = {Nielsen, Michael A.},
		title = {A geometric approach to quantum circuit lower bounds,},
eprint = {arXiv:quant-ph/0502070}	
year = {2005}}

	\bib{Nielsen2}{article}{
		author = {Nielsen, M.~A.},
				author = {Dowling, M.},
								author = {Gu, M.},
								author = {Doherty, A. C.},
		title = {Quantum Computation as Geometry},
		journal = {Science}
		volume = {311}
		pages = {1133}
		year = {2006} 
		eprint =  {arXiv:quant-ph/0603161}	}

		\bib{Penrose:1964wq}{article}{
		author = {Penrose, R.},
		title = {Gravitational collapse and space-time singularities},
		volume = {14},
		journal = {Phys. Rev. Lett.},
		year = {1965},
		pages = {57-59}, 
	}
	
			\bib{Hawking:1969sw}{article}{
		author = {Penrose, R.},
				author = {Hawking, S. W.},
		title = {The Singularities of gravitational collapse and cosmology},
		volume = {314},
		journal = {Proc. Roy. Soc. Lond. A},
		year = {1970},
		pages = {529-548}, 
	}

		\bib{Raychaudhuri:1953yv}{article}{
		author = {Raychaudhuri, A.},
		title = {Relativistic cosmology 1.},
		volume = {98},
		journal = {Phys. Rev.},
		year = {1955},
		pages = {1123-1126}, 
	}

\bib{Susskind:2014rva}{article}{
author = {Susskind, L.},
title = {Computational Complexity and Black Hole Horizons},
journal = {Fortsch. Phys.}, 
volume = {64}, 
pages = {24-43},
year = {2016},
eprint = {arXiv:1403.5695 [hep-th]]}}

\end{biblist}
\end{bibdiv}

\appendix

\section{Counterexample to   $\textrm{vol}(t) \leq \textrm{vol}_{\mathbb{H} \mathcal{[R]}}(t)$} \label{sec:H3R2counterexample}
One of the attractive properties of the Bishop-Gromov bound, and of the enhanced-Bishop-Gromov bound, is that we are not required to know the full four-index Riemann tensor $\mathcal{R}_{\mu \nu \rho \sigma}$, but only the contracted two-index Ricci tensor $\mathcal{R}_{\mu \nu} = \mathcal{R}_{\mu \nu \rho}^{\ \ \ \ \rho}$. It it non-trivial that such a bound should be possible, since the divergence of geodesics is governed by the sectional curvatures,  and the sectional curvatures are determined by $\mathcal{R}_{\mu \nu \rho \sigma}$ but not  $\mathcal{R}_{\mu \nu}$. But it turns out that just knowing the Ricci curvature---i.e. just knowing the \emph{average} sectional curvature---is enough to get a bound. (The heuristic explanation for why this works is that variance in the sectional curvatures generates shear, and shear in the Raychaudhuri equation Eq.~\ref{eq:Raychaudhuri} can only decrease the rate of expansion, so pretending that all sectional curvatures are given by the average sectional curvature will only lead one to \emph{over}estimate the volume, not underestimate it; for more see \cite{toappear}.) 

One might wonder whether we can take this one step further, and develop a useful bound that depends only on the Ricci scalar,  $\mathcal{R} = \mathcal{R}_{\mu}^{\ \mu}$. In particular, for a homogeneous space one might wonder whether there is a bound of the form 
\begin{equation}
?? \ \ \textrm{volume}(t) \ \leq \ \textrm{volume}_{\mathbb{H}[\mathcal{R}]}(t) \ \ ?? \ , \label{eq:wrongconjecture}
\end{equation}
where $\textrm{volume}_{\mathbb{H}[\mathcal{R}]}(t) $ is defined as the volume of a geodesic ball in the maximally symmetric space with the same (dimension and) Ricci scalar. Let's show that this doesn't work. 

If we allow any sign of the curvature, it is obvious this is not going to work. Consider the space $\mathbb{H}^2 \times \mathbb{S}^2$. If we make the radius of curvature of the $\mathbb{S}^2$ smaller than the radius of curvature of the  $\mathbb{H}^2$, this space will have positive Ricci curvature. The maximally symmetric space of the same Ricci curvature will be an $\mathbb{S}^4$, which has finite volume. On the other hand, because of the $\mathbb{H}^2$, the volume of a geodesic ball in $\mathbb{H}^2 \times \mathbb{S}^2$ without bound.  

\subsection{Explicit example: $\mathbb{H}^3 \times \mathbb{R}^2$} 

It is less trivial that this bound isn't going to work if we restrict the curvature to be non-positive, but let's show that now. Consider the metric\footnote{We have to go to $\mathbb{H}^3 \times \mathbb{R}^2$, because for the special case of $\mathbb{H}^2 \times \mathbb{R}^2$ considered in Sec.~\ref{sec:improvedBGexamples}, the bound Eq.~\ref{eq:wrongconjecture} actually \emph{does} apply, since by direct computation $\textrm{volume}(t) \leq \textrm{vol}_{\mathbb{H}[\mathcal{R}]}(t) = {96 \pi^2 } (2 + \cosh [ \sqrt{ \frac{ 1 }{6 } }  {t}] ) 
\sinh^4 [ \frac{1}{2} \sqrt{ \frac{ 1 }{6 } }  {t}] \leq \textrm{enhanced-BG}(t)$.} 
$\mathbb{H}^3 \times \mathbb{R}^2$: 
\begin{equation}
ds^2 = d\tau_1^2 + \tau_1^2 \, d \phi^2  + d \tau_2^2 +   \sinh^2 {\tau_2} \left( d \psi^2 + \sin^2 \psi d \chi^2\right).  \label{eq:R2H3metric}
\end{equation}
This space has scalar curvature
\begin{equation}
\mathcal{R} = - 6 \ . 
\end{equation}
The exact volume of a geodesic ball of radius $t$ is 
\begin{eqnarray}
\textrm{volume}(t) &=&  \int_0^{t} d \tau  \, 4 \pi  \sinh^2 {\tau} \cdot \pi (t^2 - \tau^2) \\
& = & \frac{1}{6} \pi ^2 \left(-8 t^3-3 \sinh [2 t]+6 t \cosh [2 t]\right) \ . \label{eq:thisformula092834}
\end{eqnarray}
The volume of a geodesic ball in the five-dimensional hyperbolic space with the same value of $\mathcal{R}$ is given by
\begin{eqnarray}
\textrm{volume}_{\mathbb{H[\mathcal{R}]}}(t)   & = &  \Omega_{d-1} \int_0^t d\tau \left( \sqrt{ \frac{d(d-1)}{- \mathcal{R} }} \sinh \left[   \frac{\sqrt{- \mathcal{R}} \tau}{\sqrt{d(d-1)}}  \right] \right)^{d-1} \\
& = & \frac{8 \pi^2}{3} \int_0^t d\tau \left( \sqrt{ \frac{10}{3 } } \sinh \left[ \sqrt{ \frac{3 }{10}} \tau  \right] \right)^4 \\
& = & \frac{25}{81} \pi ^2 \left(36 t+\sqrt{30} \left(\sinh \Bigl[2 \sqrt{\frac{6}{5}}
   t\Bigl]-8 \sinh \Bigl[\sqrt{\frac{6}{5}} t\Bigl] \right)\right) \ .  \label{eq:thisotherformula} 
  \end{eqnarray}
Comparing Eqs.~\ref{eq:thisformula092834} and \ref{eq:thisotherformula}, we find 
\begin{equation}
0 < t < 7.3216 \ \ \ \rightarrow \ \ \ \textrm{volume}(t) > \textrm{volume}_{\mathbb{H[R]}}(t)  \ . 
\end{equation} 
This is a counterexample to the conjecture Eq.~\ref{eq:wrongconjecture}. Deforming the $\mathbb{R}^2$ to be an $\mathbb{H}^2$ of very large curvature length, this also serves as a counterexample to the conjecture that Eq.~\ref{eq:wrongconjecture} might hold for spaces of strictly negative Ricci curvature. 

\subsection{Taylor expanding general metric} 

The volume of a small geodesic ball of radius $t$ in a maximally symmetric space of Ricci scalar $\mathcal{R}$ is 
\begin{equation}
\textrm{volume}_{\mathbb{H}\mathcal{[R]}}(t) =
  \frac{\Omega_{d-1}}{d} t^{d} \left( 1 - \frac{ \mathcal{R} }{6(d+2)}   t^2 + \frac{  (5d - 7)\mathcal{R}^2}{360(d-1)d(d+4)}  t^4 \ldots \right) \ . 
  \end{equation}
Comparing this to the volume of a small ball in the original space, Eq.~\ref{eq:exactvolumetofourthorderint}, we see that it gets the O($t^2$) correction term exactly right, but deviates at O($t^4$), 
\begin{equation}
\textrm{vol}(t) - \textrm{vol}_{\mathbb{H}\mathcal{[R]}}(t) =   \frac{\Omega_{d-1}}{d} t^{d} \left( \frac{(7 - 2d) E_{\mu \nu \rho \sigma} E^{\mu \nu \rho \sigma} + 3 C_{\mu \nu \rho \sigma} C^{\mu \nu \rho \sigma}}{360(2+d)(4+d)}t^4 + O(t^6) \right) \ . 
\end{equation}
For $d \geq 4$ this will go negative for large $E^2$  and small $C^2$, as we saw with $\mathbb{H}^3 \times \mathbb{R}^2$. 

\subsection{Unfalsified possibilities}
Our examples and Taylor expansion do not eliminate the possibility that Eq.~\ref{eq:wrongconjecture} might hold  for all negatively curved homogeneous spaces \emph{at late times} (indeed, it is easy to check that no product of negatively curved maximally symmetric spaces of any dimensions or curvature lengths would violate Eq.~\ref{eq:wrongconjecture} at late times); nor that Eq.~\ref{eq:wrongconjecture} might hold for all homogeneous $d = 3$ spaces at all times.

\end{document}